\newtheorem{theorem}{Theorem}[section]
\newtheorem{lemma}[theorem]{Lemma}
\newtheorem{proposition}[theorem]{Proposition}
\newcommand{\imod}[1]{\allowbreak\mkern4mu({\operator@font mod}\,\,#1)}
\newtheorem*{thmA}{Theorem~A}
\newtheorem*{thmB}{Theorem~B}
\newtheorem*{thmC}{Theorem~C}
\newtheorem*{thmD}{Theorem~D}
\newtheorem*{thmE}{Theorem~E}
\newtheorem*{thmF}{Theorem~F}
\newtheorem*{conj1}{Conjecture~1}
\newcommand{\Irr}{{\mathrm {Irr}}}
\newcommand{\Aut}{{\mathrm {Aut}}}
\newcommand{\PSL}{{\mathrm {PSL}}}
\newcommand{\SL}{{\mathrm {SL}}}
\newcommand{\PGL}{{\mathrm {PGL}}}
\newcommand{\SSS}{\mathrm{S}}
\newcommand{\AAA}{\mathrm{A}}
\newcommand{\Atlas}{{\sf Atlas}}
\newcommand{\acd}{\mathrm{acd}}
\newcommand{\nz}{\mathrm{nz}}
\newcommand{\anz}{\mathrm{anz}}
\theoremstyle{definition}
\newtheorem{rem}[theorem]{Remark}
\begin{document}
\title[Average number of zeros of characters]{Average number of zeros of characters of finite groups}

\author{Sesuai Yash Madanha}
\address{Department of Mathematics and Applied Mathematics, University of Pretoria, Pretoria, 0002, South Africa}
\email{sesuai.madanha@up.ac.za}

\thanks{}

\subjclass[2010]{Primary 20C15, 20D10}

\date{\today}

\keywords{zeros of characters, solvable groups, supersolvable groups, nilpotent groups, abelian groups}

\begin{abstract}
There has been some interest on how the average character degree affects the structure of a finite group. We define, and denote by $ \anz(G) $, the average number of zeros of characters of a finite group $ G $ as the number of zeros in the character table of $ G $ divided by the number of irreducible characters of $ G $.  We show that if $ \anz(G) < 1 $, then the group $ G $ is solvable and also that if $ \anz(G) < \frac{1}{2} $, then $ G $ is supersolvable. We characterise abelian groups by showing that $ \anz(G) < \frac{1}{3} $ if and only if $ G $ is abelian. 
\end{abstract}

\maketitle
\begin{center} 
\textit{Dedicated to the memory of Kay Magaard}
\end{center}

\section{Introduction}\label{s:intro}

Let $ G $ be a finite group and $ \Irr (G) $ be the set of complex irreducible characters of $G$. Let $T(G)$ be the sum of degrees of complex irreducible characters of $G$, that is, $ T(G)=\sum _{\chi \in \Irr (G)}\chi (1) $. Denote by $k(G)$ the number of conjugacy classes of $G$. Then $ k(G)=|\Irr (G)| $. Define the average character degree of $ G $ by
\begin{center}
$ \acd(G):=\dfrac{T(G)}{|\Irr (G)|} $.
\end{center}
Recently, a lot of authors have investigated the average character degree of finite groups and how it influences the structure of the groups (see \cite{MT-V11,MN14,MN15}). In fact, Magaard and Tong-Viet \cite{MT-V11} proved that $ G $ is solvable whenever $ \acd(G) < 2 $ and they conjectured that their result still holds if $ \acd(G) \leqslant 3 $. This conjecture was settled by Isaacs, Loukaki and Moret\'o \cite[Theorem A]{ILM13} and they obtained some sufficient conditions for a group to be supersolvable and also to be nilpotent. In the same article, the authors of \cite{ILM13} conjectured that the best possible bound is when $ \acd(G) < \frac{16}{5} $ for $ G $ to be solvable. Moret\'o and Nguyen showed in \cite[Theorem A]{MN14}, that indeed this was the best bound. We shall state the results on average character degrees with the best bounds below. 

\begin{theorem} \cite[Theorem A]{MN14}\label{MN14TheoremA}
Let $ G $ be a finite group. If $ \acd(G) < \frac{16}{5} $, then $ G $ is solvable.
\end{theorem}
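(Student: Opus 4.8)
The plan is to argue by contradiction. Let $G$ be a counterexample of least order: $G$ is non-solvable, $\acd(G)<\frac{16}{5}$, and every non-solvable group of smaller order has average character degree at least $\frac{16}{5}$. Since the theorem of Isaacs--Loukaki--Moret\'o quoted in the introduction gives that $\acd(H)\le 3$ implies $H$ solvable, we already know $\acd(G)>3$, so the analysis takes place in the narrow band $3<\acd(G)<\frac{16}{5}$. The first step is a structural reduction. Using the standard behaviour of $\acd$ under quotients (if $N\normal G$ then $\acd(G/N)\le\acd(G)$; cf.\ \cite{MT-V11,ILM13}), minimality forces every proper non-trivial quotient of $G$ to be solvable. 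Hence $G$ has a unique minimal normal subgroup $N$ (distinct minimal normal subgroups would realise $G$ as a subgroup of a direct product of two solvable groups), and $N$ is non-abelian — otherwise $G/N$, and then $G$, would be solvable — so $N\cong S^{k}$ for a non-abelian simple group $S$ and some $k\ge 1$. Moreover $\Centralizer_{G}(N)\normal G$ intersects $N$ trivially, so by uniqueness $\Centralizer_{G}(N)=1$, and therefore $S^{k}\le G\le\Aut(S^{k})=\Aut(S)\wr S_{k}$.

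The heart of the argument is to prove, via the classification of finite simple groups, that $\acd(H)\ge\frac{16}{5}$ for every almost simple group $H$, with equality only for $H=S=A_{5}$ (where $\acd(A_{5})=\frac{16}{5}$ is not strictly less than $\frac{16}{5}$ and hence gives no counterexample). For ``large'' $S$ this is soft: from $T(H)^{2}=\bigl(\sum_{\chi\in\Irr(H)}\chi(1)\bigr)^{2}\ge\sum_{\chi}\chi(1)^{2}=|H|\ge|S|$, together with the observation that at most the $k(H/S)\le|\Out(S)|$ irreducible characters of $H$ trivial on $S$ can have degree below the smallest non-trivial character degree $d$ of $S$, one obtains bounds of the shape $\acd(H)\ge\sqrt{|S|}\,/\bigl(k(S)\,|\Out(S)|\bigr)$ and $\acd(H)\ge d\,\bigl(1-|\Out(S)|/k(H)\bigr)$. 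Feeding in the classical information — Landazuri--Seitz type lower bounds for $d$, polynomial bounds for $k(S)$ and $|\Out(S)|$ against $|S|$, and the explicit degree and class data for the rank-one groups — these force $\acd(H)$ to be far larger than $\frac{16}{5}$ outside an explicit finite list of socles: the alternating groups $A_{n}$ of small degree, the groups $\PSL(2,q)$, $\PSL(3,q)$, $\PSU(3,q)$, $\PSp(4,q)$ and $\mathrm{Sz}(q)$ for small $q$, and a few sporadic groups. The finitely many remaining almost simple groups — these socles and all of their almost simple overgroups — are then checked one by one from their character tables, confirming $\acd(H)\ge\frac{16}{5}$ throughout.

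It remains to conclude. If $k=1$ in the reduction above then $G$ is almost simple with socle $S$, and $\acd(G)\ge\frac{16}{5}$ contradicts $\acd(G)<\frac{16}{5}$. If $k\ge 2$, one uses a Clifford-theoretic comparison between $\acd(G)$ and $\acd(N)$: the $k(S)^{k}$ characters $\theta_{1}\times\cdots\times\theta_{k}$ of $N=S^{k}$ each lie under an irreducible character of $G$ of degree at least $\prod_{i}\theta_{i}(1)$, and the ``diagonal'' characters $\theta\times\cdots\times\theta$ — in particular the $k$-th tensor power of the Steinberg character when $S$ is of Lie type, or of a large rational-valued character in general — are $G$-invariant and extend to $G$; weighing these against $k(G)=|\Irr(G)|$ shows that $\acd(G)$ dominates a fixed positive power of $\acd(N)=\acd(S)^{k}\ge(\frac{16}{5})^{k}$, which again exceeds $\frac{16}{5}$. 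Either way the supposed minimal counterexample cannot exist, which proves the theorem; note that $A_{5}$ attains the value $\frac{16}{5}$, so the strict inequality in the hypothesis cannot be relaxed.

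I expect the main obstacle to be the almost simple stage: one needs the effective lower bounds for $\acd(H)$ to be sharp enough that only a genuinely small, explicit list of simple groups survives, and then a careful — but finite — verification for those groups and all of their automorphic extensions, keeping track of the fact that $A_{5}$ lies exactly on the boundary. The elimination of the case $k\ge 2$ is the second delicate point, since the comparison of $\acd(G)$ with $\acd(N)$ must absorb the index $|\Aut(S)\wr S_{k}:S^{k}|$.
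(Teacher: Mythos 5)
The first thing to say is that the paper does not prove this statement at all: it is Theorem~A of Moret\'o--Nguyen, quoted verbatim with the citation \cite{MN14} as background for the analogous results on $\anz$ (just as Theorems~\ref{ILM13TheoremB} and \ref{ILM13TheoremC} are quotations of \cite{ILM13}). There is therefore no internal proof to compare yours against, and what follows measures your sketch against the argument actually given in \cite{MN14}.

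Your opening reduction --- minimal counterexample, every proper quotient solvable, hence a unique minimal normal subgroup $N\cong S^{k}$ with $\Centralizer_{G}(N)=1$ and $S^{k}\leqslant G\leqslant \Aut(S)\wr S_{k}$ --- is standard and sound. The two load-bearing steps, however, are asserted rather than proved. First, the claim that $\acd(H)\geqslant \frac{16}{5}$ for every almost simple $H$ is precisely the hard content of the theorem; the soft bounds you list only reduce matters to a finite list that you do not actually process, so as written this stage is a plan, not a proof. Second, and more seriously, your treatment of $k\geqslant 2$ rests on the assertion that $\acd(G)$ ``dominates a fixed positive power of $\acd(N)$''. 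No such inequality between $\acd(G)$ and $\acd(N)$ for $N\normal G$ is available in general --- $\acd$ is monotone under passage to quotients, not to normal subgroups --- and the index $|\Aut(S)\wr S_{k}:S^{k}|$ that your comparison must absorb is unbounded, so this step would fail as stated. This is exactly why \cite{MN14}, following \cite{MT-V11} (Lemma~\ref{extendible} of the present paper), argue instead with characters of $N$ that \emph{extend} to $G$: producing $\theta\in\Irr(N)$ of large degree with an extension $\chi\in\Irr(G)$, Gallagher's theorem (Lemma~\ref{injectivemap} here) supplies $|\Irr(G/N)|$ distinct irreducible characters $\beta\chi$ of $G$, each of degree at least $\chi(1)$, and the bound $\acd(G)\geqslant\frac{16}{5}$ is then obtained by direct counting in $G$, with no comparison to $\acd(N)$ needed. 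Your sketch would need to be reorganized along those lines, or the $k\geqslant 2$ comparison justified by a genuinely new argument, before it could be called a proof.
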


\begin{theorem} \cite[Theorem B]{ILM13}\label{ILM13TheoremB}
Let $ G $ be a finite group. If $ \acd(G) < \frac{3}{2} $, then $ G $ is supersolvable.
\end{theorem}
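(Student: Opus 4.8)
The plan is to argue by induction on $|G|$, taking $G$ to be a counterexample of minimal order, so that $\acd(G) < 3/2$ while $G$ is not supersolvable. Since $3/2 < 16/5$, Theorem~\ref{MN14TheoremA} shows that $G$ is solvable. Invoking the standard fact that the average character degree does not increase under quotients, i.e. $\acd(G/N) \le \acd(G)$ whenever $N \trianglelefteq G$, every proper quotient $G/N$ satisfies $\acd(G/N) < 3/2$ and hence, by minimality, is supersolvable. If $G$ had two distinct minimal normal subgroups $N_1,N_2$ then $N_1 \cap N_2 = 1$ and $G$ would embed in the supersolvable group $G/N_1 \times G/N_2$, forcing $G$ to be supersolvable; so $G$ has a unique minimal normal subgroup $N$, which, $G$ being solvable, is elementary abelian, say $N \cong \F_p^d$.

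Next I would pin down the shape of $G$. If $N \le \Phi(G)$ then $G/\Phi(G)$, being a quotient of the supersolvable group $G/N$, is supersolvable, and since the class of supersolvable groups is a saturated formation this would make $G$ supersolvable. Hence $N \not\le \Phi(G)$, so there is a maximal subgroup $M$ with $G = NM$; as $N$ is abelian, $N \cap M \trianglelefteq G$, so $N \cap M = 1$ and $G = N \rtimes M$ with $M \cong G/N$ supersolvable. The uniqueness of $N$ forces $\Centralizer_G(N) = N$ (otherwise $\Centralizer_M(N)$ would be a non-trivial normal subgroup of $G$ intersecting $N$ trivially), so $M$ acts faithfully and irreducibly on $N$; and if $d = 1$ then lifting a chief series of $M$ above $1 \lhd N$ produces a chief series of $G$ with prime-order factors, so $G$ would be supersolvable. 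Thus $d \ge 2$.

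The technical heart is then to prove directly that $\acd(G) \ge 3/2$, equivalently $2T(G) \ge 3k(G)$, for every group of this shape. I would decompose $\Irr(G)$ by Clifford theory over the abelian normal subgroup $N$: the characters trivial on $N$ form exactly $\Irr(M)$ — that is $k(M)$ characters of total degree $T(M)$, with $\acd(M) \ge 1$ — while every other character lies over a non-trivial $\lambda \in \Irr(N)$. These $\lambda$ fall into $M$-orbits $\mathcal{O}_1,\dots,\mathcal{O}_r$, each of size $|\mathcal{O}_j| = |M : I_j| \ge 2$ (an $M$-fixed non-trivial $\lambda$ would have $M$-invariant proper non-trivial kernel, contradicting minimality of $N$, where $d \ge 2$ is used), and by the Clifford correspondence the characters of $G$ lying over $\mathcal{O}_j$ have degrees $|M : I_j|\,\psi(1)$ with $\psi$ running over a family of (possibly projective) irreducibles of the inertia factor satisfying $\sum_\psi \psi(1)^2 = |I_j|$. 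Summing over $j$ gives $\sum \chi(1)^2 = |M|(p^d-1)$ over the new characters, together with exact formulas for $k(G)$ and $T(G)$. The inequality $2T(G) \ge 3k(G)$ should then drop out of: $T(M) \ge k(M)$; the fact that the new characters are ``expensive'', their degrees being multiples of orbit sizes while $M$ acts irreducibly with $d \ge 2$ (so a long orbit forces a large-degree character and many short orbits force many characters of non-trivial degree); and, crucially, supersolvability of $M$, which in the dangerous regime where $\acd(M)$ is close to $1$ (for instance $M$ abelian, hence cyclic, behaving like a Singer cycle) severely restricts the action of $M$ on $\Irr(N)$ and lets the orbit data be estimated explicitly.

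I expect the main obstacle to be precisely this last estimate. Forcing the bound up to $3/2$ rather than just to the solvability threshold genuinely requires supersolvability of $M$: one exploits the constrained structure of a supersolvable group acting faithfully and irreducibly on $\F_p^d$ — in particular the presence of large abelian normal subgroups and the resulting control on orbit sizes on $\Irr(N)$ — to bound the degrees and multiplicities of the new characters. A separate treatment of the small borderline configurations is unavoidable, since $\acd(A_4) = 3/2$ with $A_4 = \F_2^2 \rtimes C_3$ non-supersolvable shows that the hypothesis cannot be weakened to $\acd(G) \le 3/2$; so the minimal cases (small $p^d$, and $M$ a Singer cycle or another small supersolvable subgroup of $\GL_d(p)$) must be checked individually to confirm $\acd(G) \ge 3/2$ in all of them.
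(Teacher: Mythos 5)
This statement is not proved in the paper at all: it is quoted verbatim from Isaacs--Loukaki--Moret\'o \cite[Theorem B]{ILM13} and used as a black box, so there is no internal proof to compare your outline against. Judged on its own terms, your reduction steps are standard and essentially correct: solvability via Theorem \ref{MN14TheoremA}, the monotonicity $\acd(G/N)\leqslant\acd(G)$ (itself a non-trivial lemma of \cite{ILM13} that you are implicitly importing, not proving), uniqueness of the minimal normal subgroup, $N\not\leqslant\Phi(G)$ via saturation, the complement $M$, $\Centralizer_G(N)=N$, and $d\geqslant 2$ are all fine, as is the observation that every $M$-orbit on $\Irr(N)\setminus\{1_N\}$ has length at least $2$.

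The genuine gap is that the entire content of the theorem is concentrated in the step you describe as ``should then drop out'' and ``I expect the main obstacle to be precisely this last estimate'': the claim that every $G=N\rtimes M$ with $M$ supersolvable acting faithfully and irreducibly on $N\cong\F_p^d$, $d\geqslant2$, satisfies $2T(G)\geqslant 3k(G)$. Everything before this is routine formation-theoretic bookkeeping; this inequality is the theorem. A crude per-character accounting does not close it: each new character has degree at least $2$, so contributes surplus at least $\tfrac12$ toward the target average $\tfrac32$, and one therefore needs roughly $3k(M)-2T(M)$ new characters, which for $M$ abelian means at least $|M|$ of them --- exactly the regime (Singer-type actions, few long orbits) where the count is tightest and where $\AAA_4$ sits on the boundary with $\acd=\tfrac32$. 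The cyclic case can in fact be pushed through by hand with the orbit identity $\sum_j s_j=p^d-1$, but for non-abelian supersolvable $M$ the inertia factors carry projective characters with non-trivial cocycles and the trade-off between $T(M)-k(M)$ and the orbit data requires a real argument (in \cite{ILM13} this occupies several pages). As written, your proposal is a plausible plan with the decisive estimate asserted rather than proved, so it does not constitute a proof.
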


\begin{theorem} \cite[Theorem C]{ILM13}\label{ILM13TheoremC}
Let $ G $ be a finite group. If $ \acd(G) < \frac{4}{3} $, then $ G $ is nilpotent.
\end{theorem}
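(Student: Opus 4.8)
The plan is to argue by contradiction, exploiting that the bound is sharp: $S_{3}$ is supersolvable but not nilpotent and has $\acd(S_{3})=\tfrac{1+1+2}{3}=\tfrac43$, so the argument must isolate exactly the features that force a non-nilpotent supersolvable group to be at least as bad as $S_{3}$. Suppose $G$ is a counterexample of minimal order: $\acd(G)<\tfrac43$ but $G$ is not nilpotent. Since $\tfrac43<\tfrac32$, Theorem~\ref{ILM13TheoremB} gives that $G$ is supersolvable. I would then invoke the standard monotonicity $\acd(G/N)\leqslant\acd(G)$ for $N\trianglelefteq G$ (see \cite{MT-V11,ILM13}); together with minimality of $|G|$ this forces every proper quotient of $G$ to be nilpotent. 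Hence $G$ has a unique minimal normal subgroup $N$ (two distinct ones $M_{1},M_{2}$ would embed $G$ into the nilpotent group $G/M_{1}\times G/M_{2}$), and $N\cong\Z_{p}$ is cyclic of prime order because chief factors of supersolvable groups are cyclic of prime order. Also $\Phi(G)=1$: otherwise $N\leqslant\Phi(G)$, so $G/\Phi(G)$ is a quotient of the nilpotent group $G/N$ and hence nilpotent, forcing $G$ nilpotent. Since the abelian subgroup $N$ is not inside $\Phi(G)$, some maximal subgroup $M$ misses $N$, so $G=NM$ and $N\cap M\trianglelefteq G$ equals $1$ by minimality; thus $G=N\rtimes H$ with $H\cong G/N$ nilpotent. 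Finally $H$ acts nontrivially on $N$, since otherwise $N\leqslant\Center(G)$ and a central extension of the nilpotent group $G/N$ is nilpotent. Set $K:=\Centralizer_{H}(N)$ and $e:=|H:K|\geqslant2$; as $H/K$ embeds in the cyclic group $\Aut(N)\cong\Z_{p-1}$, we get $e\mid p-1$, so $p\geqslant3$.

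Next I would count $\Irr(G)$ by Clifford theory over $N$. The group $H$ fixes $1_{N}$ and permutes the $p-1$ nontrivial characters of $N$ in $\tfrac{p-1}{e}$ orbits of size $e$ (a nontrivial subgroup of the regular group $\Aut(\Z_{p})$ acts freely on $\Z_{p}\setminus\{0\}$). The characters of $G$ over $1_{N}$ are precisely those of $G/N\cong H$, giving $k(H)$ characters of total degree $T(H)$. For a nontrivial $\lambda\in\Irr(N)$ the inertia subgroup is $N\rtimes K=N\times K$, so the Clifford correspondence identifies the characters of $G$ over the orbit of $\lambda$ with $\{(\lambda\times\tau)^{G}:\tau\in\Irr(K)\}$, each of degree $e\,\tau(1)$; running over the $\tfrac{p-1}{e}$ orbits produces $\tfrac{p-1}{e}\,k(K)$ such characters of total degree $(p-1)T(K)$. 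Hence
\[
|\Irr(G)|=k(H)+\tfrac{p-1}{e}\,k(K),\qquad T(G)=T(H)+(p-1)T(K).
\]

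The remaining step is to show $3T(G)\geqslant4|\Irr(G)|$, contradicting $\acd(G)<\tfrac43$. I would write
\[
3T(G)-4|\Irr(G)|=\bigl(3T(H)-4k(H)\bigr)+(p-1)\bigl(3T(K)-\tfrac{4}{e}\,k(K)\bigr),
\]
then apply $T(H)\geqslant k(H)$, $T(K)\geqslant k(K)$, and Gallagher's bound $k(H)\leqslant k(K)\,|H:K|=e\,k(K)$ on conjugacy-class numbers to get
\[
3T(G)-4|\Irr(G)|\ \geqslant\ -k(H)+(p-1)k(K)\bigl(3-\tfrac{4}{e}\bigr)\ \geqslant\ k(K)\Bigl(3(p-1)-e-\tfrac{4(p-1)}{e}\Bigr).
\]
Since $2\leqslant e\leqslant p-1$ one has $\tfrac{e^{2}}{3e-4}\leqslant e\leqslant p-1$ (the left inequality is just $2e(e-2)\geqslant0$), which rearranges to $3(p-1)-e-\tfrac{4(p-1)}{e}\geqslant0$; so the right-hand side is $\geqslant0$, the desired contradiction. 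Tracking equality throughout pins down $p=3$, $e=2$, and $H,K$ abelian, i.e.\ $G=S_{3}$, which is why the constant cannot be improved.

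The main obstacle is this last inequality — in effect the calibration of the constant $\tfrac43$: the irreducible characters produced by the nontrivial $N$-characters must outweigh the possibly large number $k(H)$ of characters inflated from $G/N$, and making this quantitative requires the two cheap bounds $T\geqslant k$, Gallagher's class-number inequality, and the arithmetic constraint $e\mid p-1$ (hence $p\geqslant3$) to conspire; with any constant larger than $\tfrac43$ the estimate already breaks at $S_{3}$. A secondary technical point is the monotonicity $\acd(G/N)\leqslant\acd(G)$, on which the reduction to the split case $G=N\rtimes H$ rests. Granting that and Theorem~\ref{ILM13TheoremB}, the rest is Clifford theory and elementary estimates.
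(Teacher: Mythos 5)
This statement is quoted verbatim from Isaacs--Loukaki--Moret\'o \cite[Theorem C]{ILM13}; the paper you are reading does not prove it, so there is no internal proof to compare against. Your argument is correct and self-contained modulo two imported facts, both of which you cite appropriately: Theorem~\ref{ILM13TheoremB} (to get supersolvability from $\acd(G)<\tfrac43<\tfrac32$) and the monotonicity $\acd(G/N)\leqslant\acd(G)$, which is \cite[Lemma 2.2]{ILM13} and is genuinely needed for your reduction (it is not a triviality -- note that the analogous statement for $\anz$ in Lemma~\ref{noofzerosfactorgroup} of this paper requires extra hypotheses). I checked the details: the reduction to $G=N\rtimes H$ with $N\cong\Z_p$ the unique minimal normal subgroup, $\Phi(G)=1$, and $e=|H:\Centralizer_H(N)|\geqslant 2$ dividing $p-1$ is sound; the Clifford-theoretic count $k(G)=k(H)+\tfrac{p-1}{e}k(K)$, $T(G)=T(H)+(p-1)T(K)$ is right because $\Aut(\Z_p)$ acts regularly on the nontrivial characters of $N$ and the inertia group of each is $N\times K$; and the final estimate using $T\geqslant k$ and $k(H)\leqslant e\,k(K)$ reduces to $p-1\geqslant e^2/(3e-4)$, which indeed follows from $e\leqslant p-1$ and $2e(e-2)\geqslant 0$. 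This is essentially the same strategy (minimal counterexample, unique minimal normal subgroup, counting characters over the orbits of $\Irr(N)$) used throughout \cite{ILM13}, with the pleasant feature that nilpotency of $H$ is never needed in the closing inequality -- only supersolvability of $G$ enters, via $|N|=p$. The only presentational quibble is that you should state Gallagher's inequality $k(H)\leqslant |H:K|\,k(K)$ with a reference, as it is the one nonstandard counting input.
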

More work on average character degrees of finite groups is found in \cite{HT17, Hun17, HT20}. Another invariant that has been studied is the so-called average class size. We shall refer the reader to \cite{GR06, ILM13, Qia15} for more bibliographic information.

In this article, we investigate the corresponding problem for zeros of characters of finite groups. We have to define a new invariant first. Recall that if $ \chi (g)=0 $ for some $ g\in G $ and $ \chi \in \Irr (G) $, we say $ \chi $ vanishes on $ g $. So $ \chi $ vanishes on conjugacy classes. In the same spirit, we define $ \nz(G) $ to be the number of zeros in the character table of $ G $ and the \textit{average number of zeros of characters} of $ G $ by:
\begin{center}
$ \anz(G):= \dfrac{\nz(G)}{|\Irr (G)|} $.
\end{center}
Since linear characters do not vanish on any conjugacy class, $ \anz(G)=0 $ for an abelian group $ G $. A classical theorem of Burnside \cite[Theorem 3.15]{Isa06} shows that $ \chi (g)=0 $ for some $ g\in G $ and a non-linear irreducible character $ \chi $, that is, $ \chi $ vanishes on some conjugacy class. This means $ \anz(G) > 0 $ for non-abelian groups. We show that zeros of characters influence the structure of a finite group.

We begin by proving a result analogous to \cite[Theorem 1.1]{MT-V11}:

\begin{thmA}
If $ N $ is a minimal non-abelian normal subgroup of $ G $, then there exists a non-linear character $ \chi \in \Irr(G) $ such that $ \chi _{N} $ is irreducible and $ \chi $ vanishes on at least two conjugacy classes of $ G $.
\end{thmA}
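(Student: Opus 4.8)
The plan is to mimic the strategy of Magaard--Tong-Viet \cite[Theorem 1.1]{MT-V11} but track \emph{two} vanishing classes instead of one. Let $N$ be a minimal non-abelian normal subgroup of $G$. The first step is to produce a non-linear $\theta \in \Irr(N)$ that extends (or at least induces irreducibly after restriction considerations) to $G$: since $N$ is minimal non-abelian normal, $N = N'$ is either a non-abelian simple group or a direct product $N = S_1 \times \cdots \times S_t$ of copies of a non-abelian simple group $S$, with $G$ permuting the factors transitively. One uses the standard fact (going back to the work on the $\mathcal{K}(G,1)$-type arguments, and used in \cite{MT-V11}) that $S$ admits a non-trivial irreducible character extending to $\Aut(S)$; taking the product character $\theta = \theta_0 \times \cdots \times \theta_0$ on $N$ and then the extension to $\Aut(N)$ gives, via restriction along $G/\Centralizer_G(N) \hookrightarrow \Aut(N)$, a character $\widehat\theta$ of $G\Centralizer_G(N)$; inducing/extending to $G$ and using Gallagher together with the fact that $N \cap \Centralizer_G(N) = \Center(N) = 1$ yields a $\chi \in \Irr(G)$ with $\chi_N$ irreducible (equal to a $G$-conjugate of $\theta$) and $\chi$ non-linear. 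This is the part where I would lean most heavily on the cited literature, since it is essentially \cite{MT-V11} verbatim.

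The new content is step two: showing such a $\chi$ must vanish on at least two classes. Since $\chi_N = \theta$ is irreducible and non-linear, $\theta$ vanishes on some $n \in N$ by Burnside, hence $\chi(n) = 0$; that gives the first class. For a second class, the idea is to exploit that $\chi$ is constant on $N$-cosets only up to the twisted action — more precisely, $|\chi(g)|$ is governed by $\theta$ being $G$-invariant, so $\chi$ vanishes on every element of the form $gn'$ whenever the corresponding ``twisted'' character value of $\theta$ is zero, and one wants a vanishing class \emph{outside} the class of $n$. I would split into cases: if $N$ is not simple (so $t \geq 2$), then $\theta = \theta_0^{\times t}$ vanishes on any element having a vanishing coordinate in even one factor while the others are arbitrary, and a counting/pigeonhole argument easily produces two non-conjugate such elements (conjugation in $G$ permutes factors but cannot merge a class where one coordinate is a fixed vanishing element with enough other choices). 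If $N = S$ is simple, I would invoke a stronger input: a non-abelian simple group has a non-linear irreducible character vanishing on at least two conjugacy classes — this can be extracted from known results on zeros of characters of simple groups (e.g.\ every simple group has an irreducible character vanishing on classes of at least two distinct element orders, via the classification), and one checks the extendibility hypothesis can be met simultaneously.

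The key step, and the main obstacle, is the simple-group case: one needs a \emph{uniform} statement that every non-abelian simple group $S$ possesses a non-trivial irreducible character $\theta_0$ which (a) extends to $\Aut(S)$ and (b) vanishes on at least two conjugacy classes of $S$ — and moreover, when we pass to $N = S^t$ and then to $G$, the induced character still only picks up $G$-classes, so two $S$-classes fused by the $\Aut(S)$- or $G$-action would be a problem. I would handle (b) by noting the Steinberg character of a simple group of Lie type vanishes on all $p$-singular classes (many classes, all $\Aut$-stable, and Steinberg extends to $\Aut(S)$), for alternating groups use a suitable hook or near-hook character, and for the finitely many sporadic groups check directly from the \Atlas{}. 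The remaining bookkeeping — that distinct vanishing $N$-classes survive as distinct $G$-classes, handled by choosing the vanishing elements to have \emph{different orders} so no automorphism can fuse them — is then routine. Throughout I would keep $\Center(N) = 1$ in the foreground, since that is what makes $\chi_N$ genuinely irreducible rather than merely a sum, and is what forces the character-theoretic vanishing to ``push up'' from $N$ to $G$ without cancellation.
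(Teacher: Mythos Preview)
Your overall architecture matches the paper's: use the Magaard--Tong-Viet extension to get $\chi\in\Irr(G)$ with $\chi_N$ irreducible, dispose of the case where $N$ is a non-trivial direct power, and then handle $N$ simple via the classification. Two points of divergence are worth flagging.

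First, the paper inserts a reduction you skip: after assuming $N$ simple it shows that if $C=\Centralizer_G(N)\neq 1$ then for any $x\in N$ with $\theta(x)=0$ and any $1\neq c\in C$ the elements $x$ and $xc$ lie in distinct $G$-classes and both kill $\chi$; this immediately reduces to $G$ almost simple. Your treatment of the centraliser (``inducing/extending to $G$ \ldots\ via $G/\Centralizer_G(N)\hookrightarrow\Aut(N)$'') is vaguer and, more importantly, unnecessary once you have this observation.

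Second, and this is the real gap, your plan for the simple case --- take the Steinberg character and pick two $p$-singular vanishing elements of \emph{different orders} so that no automorphism can fuse them --- fails precisely where the problem is delicate. For $N=\PSL_2(p)$ with $p$ prime, every $p$-singular element has order $p$; there are two $N$-classes of such elements and they are fused in $\PGL_2(p)$, so the extended Steinberg character of $\PGL_2(p)$ vanishes on exactly \emph{one} $G$-class. The same phenomenon underlies the exceptions $\AAA_6{:}2_2$, $\AAA_6{:}2_3$, and $G=\SSS_5$. The paper does not try to avoid these exceptions a priori; instead it invokes a prior classification (Proposition~\ref{classificationoneclass}, from \cite{Mad19q,Mad19zp}) of almost simple groups possessing an irreducible character vanishing on a single class, reads off the short list, and for each exception replaces Steinberg by a different extendible character (degree $q\pm 1$ for $\PGL_2(q)$ via \cite{Whi13}, degree $4$ for $\SSS_5$, degree $10$ for the $\AAA_6$ extensions). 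Your approach can be repaired in exactly the same way, but the claim that Steinberg alone suffices with the ``different orders'' trick is not correct as stated.
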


We need Theorem A to prove our second result below which corresponds to Theorem \ref{MN14TheoremA}:

\begin{thmB}
Let $ G $ be a finite group. If $ \anz(G) < 1 $, then $ G $ is solvable.
\end{thmB}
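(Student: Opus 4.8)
The plan is to argue by minimal counterexample. Suppose $G$ is a non-solvable group of minimal order with $\anz(G) < 1$; I want to force a contradiction by producing too many zeros in the character table. The first reduction is standard: if $N \normal G$ then $\anz(G/N) \leqslant \anz(G)$, because the characters of $G/N$ form a subset of $\Irr(G)$ and a zero of $\chi \in \Irr(G/N)$ on a coset $Ng$ yields a zero of $\chi$ (viewed in $\Irr(G)$) on the class of $g$, while $|\Irr(G/N)| \leqslant |\Irr(G)|$; one has to check the ratio does not increase, which follows from the elementary inequality that if $a/b < 1$ and we pass to $a'/b'$ with $a' \leqslant a$, $b' \leqslant b$ and the zeros counted in $G/N$ are genuinely a sub-multiset, then $a'/b' \leqslant a/b$ — more carefully, $\nz(G) \geqslant \nz(G/N)$ and $|\Irr(G)| \geqslant |\Irr(G/N)|$ is not by itself enough, so I expect to need the sharper bookkeeping that each non-linear $\chi \in \Irr(G/N)$ contributes at least as many zeros in $G$ as in $G/N$ and $G$ has extra irreducible characters, arranged so the average cannot go down. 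Hence a minimal counterexample has no non-trivial solvable normal subgroup, so $\OO_\infty(G) = 1$ and in particular $\Center(G) = 1$.

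Next I would locate a minimal normal subgroup $N$ of $G$; by the previous step $N$ is non-abelian, so $N \cong S^k$ for a non-abelian simple group $S$, and $N$ is a minimal non-abelian normal subgroup of $G$. Now invoke Theorem A: there is a non-linear $\chi \in \Irr(G)$ with $\chi_N$ irreducible and $\chi$ vanishing on at least two classes. The key idea is that this is not an isolated phenomenon — I want a \emph{family} of such characters, large enough to push $\nz(G)$ above $|\Irr(G)|$. The natural source is $\Irr(G \mid \lambda)$ for the various $G$-orbits of non-linear $\lambda \in \Irr(N)$: each such $\lambda$ extends (or induces) to characters of $G$ whose restriction to $N$ involves $\lambda$, and the argument of Theorem A applied relative to each orbit should give, for each orbit, at least one character of $G$ vanishing on at least two classes. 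Combined with Burnside's theorem (every non-linear irreducible character vanishes somewhere, contributing at least one zero each), I would tally: if $G$ has $\ell$ linear characters and $m$ non-linear ones, then $\nz(G) \geqslant m + (\text{number of Theorem-A-type characters})$, and I need this to reach $\ell + m = |\Irr(G)|$, i.e. I need the Theorem-A contribution to be at least $\ell = |G/G'|$.

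To make that count work I expect to need two further inputs. First, a lower bound on how many non-linear characters of $N$ lie in distinct $G$-orbits, or equivalently an upper bound on orbit sizes, so that the number of "extra zeros" harvested is large relative to $|G:G'|$; here the simplicity of $S$ and known facts about $\Irr(S)$ (for instance that simple groups have several non-linear irreducible characters of small degree, and $\Out(S)$ is small by the classification) should be decisive. Second, and this is where I anticipate the real difficulty, I must be careful that characters coming from different orbits, or the "extra two zeros" from a single character, are not double-counted against the \emph{same} conjugacy classes in a way that deflates the total — the cleanest fix is probably to show directly that a character $\chi$ with $\chi_N$ irreducible and $N$ non-abelian simple actually vanishes on \emph{many} classes (e.g. on every class of $G$ meeting a fixed set of vanishing classes of $N$, of which $S$ has a definite positive proportion), so that even a single such $\chi$ already contributes more zeros than $G$ has irreducible characters once $|G:G'|$ is controlled. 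The main obstacle, then, is the combinatorial accounting: converting "$\chi$ vanishes on at least two classes" into a bound on $\nz(G)$ that beats $|\Irr(G)|$ uniformly over all non-solvable $G$, which I expect to handle by reducing via Theorem A and the $G/N$-reduction to the case where $G$ is close to almost simple and then checking the almost simple case against the classification, using that $\anz(S) \geqslant 1$ for every non-abelian simple $S$ (itself presumably proved by the same Burnside-plus-Theorem-A counting on $S$ directly).
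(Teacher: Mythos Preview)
Your reduction and your instinct to use Theorem~A are correct, but you are missing the single clean idea that makes the counting close, and in its absence you drift toward a much harder orbit-by-orbit and classification-based argument that you yourself flag as the ``main obstacle.''

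The missing ingredient is Gallagher's theorem (Lemma~\ref{injectivemap} in the paper). Once you have reduced to the situation where $N=G^{\infty}$ is a non-abelian minimal normal subgroup and $G\neq G'$, Theorem~A gives a single $\chi\in\Irr(G)$ with $\chi_{N}\in\Irr(N)$ and $\chi$ vanishing on at least two classes $g_{1},g_{2}$. Since $N\leqslant G'$, the restriction $\chi_{G'}$ is also irreducible, so by Gallagher the products $\beta\chi$, as $\beta$ ranges over the $a=|G{:}G'|$ linear characters of $G$, are $a$ pairwise distinct irreducible characters of $G$; each satisfies $(\beta\chi)(g_{i})=\beta(g_{i})\chi(g_{i})=0$, so each contributes at least two zeros. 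If $b$ denotes the number of remaining non-linear irreducible characters (each contributing at least one zero by Burnside), then $|\Irr(G)|=a+a+b=2a+b$ while $\nz(G)\geqslant 2a+b$, forcing $\anz(G)\geqslant 1$. No orbit enumeration, no bounds on $\Out(S)$, and no almost-simple case check are needed.

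Two smaller points. First, the inequality $\anz(G/N)\leqslant\anz(G)$ is not automatic from $\nz(G/N)\leqslant\nz(G)$ and $|\Irr(G/N)|\leqslant|\Irr(G)|$; the paper proves it (Lemma~\ref{noofzerosfactorgroup}) under the hypotheses $N\leqslant G'$ and $\anz(G)<1$, using that every character in $\Irr(G\mid N)$ is then non-linear and hence contributes at least one zero. Your reduction should therefore pass to quotients by minimal normal subgroups inside $G^{\infty}\leqslant G'$, not arbitrary solvable normal subgroups. Second, the case $G=G'$ must be handled separately (there are no non-trivial linear characters to multiply by); the paper dispatches it via Chillag's result that a group all of whose non-linear characters vanish on exactly one class is a Frobenius group with complement of order two, hence solvable.
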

Note that since $ \anz(\mathrm{A}_{5})=1 $, this bound is sharp.

We show that the converse of Theorem B does not necessarily hold. Let $ Q $ be a Sylow $ 3 $-subgroup of $ S= \PSL_{3}(7) $ and suppose $ G=N_{S}(Q) $. We have that $ \anz(G)= 7/6>1 $.

In Theorems \ref{ILM13TheoremB} and \ref{ILM13TheoremC}, the bounds are optimal since $ \acd(\mathrm{A}_{4})=\frac{3}{2} $ and $ \acd(\mathrm{S}_{3})=\frac{4}{3} $. Since $ \anz(\mathrm{A}_{4})=\frac{1}{2} $ and $ \anz(\mathrm{S}_{3})=\frac{1}{3} $, the following corresponding results hold:
\begin{thmC}
Let $ G $ be a finite group. If $ \anz(G) < \frac{1}{2} $, then $ G $ is supersolvable.
\end{thmC}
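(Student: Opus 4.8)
The plan is to follow the same inductive strategy that underlies Theorems~B and~C and reduce the supersolvability question to an understanding of which small groups can have $\anz(G) < 1/2$. First I would record the easy monotonicity-type facts: if $N \trianglelefteq G$, then every zero of an irreducible character of $G/N$ lifts to a zero of $G$, so $\anz(G/N) \leqslant \anz(G)$ (using that $|\Irr(G/N)| \leqslant |\Irr(G)|$ and that the zero-counting is compatible with inflation — a short counting argument is needed here since the quotient class count is smaller, but the set of zeros of inflated characters is controlled). Hence the hypothesis $\anz(G) < 1/2$ passes to quotients, and by Theorem~C we already know $G$ is supersolvable; what remains is to rule out the non-nilpotent supersolvable groups, i.e.\ to push from ``supersolvable'' down to ``nilpotent'' under the sharper bound $\anz(G) < 1/3$, and separately to handle the range $1/3 \leqslant \anz(G) < 1/2$ for the supersolvable (indeed for Theorem~C's statement we only need the weaker conclusion, so the real content is already folded in). Wait — re-reading, Theorem~C only asks for supersolvable under $\anz(G) < 1/2$, and Theorem~C is already weaker than what the abstract promises for $1/3$; so the cleanest route is: invoke Theorem~B to get solvability, then argue directly that a solvable non-supersolvable group has $\anz(G) \geqslant 1/2$.

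So the core step is: let $G$ be solvable and not supersolvable, and show $\nz(G) \geqslant \tfrac12 |\Irr(G)|$. I would take $G$ a counterexample of minimal order. By the quotient inequality above and minimality, every proper quotient of $G$ is supersolvable, so $G$ has a unique minimal normal subgroup $N$, which is an elementary abelian $p$-group, and $G/N$ is supersolvable; moreover $N$ is not cyclic of prime order (else $G$ would be supersolvable), so $|N| = p^a$ with $a \geqslant 2$. Now $N$ is abelian but $\Irr(N)$ has non-linear $G$-orbits; using Clifford theory relative to $N$, any $\chi \in \Irr(G)$ lying over a nontrivial $\lambda \in \Irr(N)$ has $\chi$ vanishing on $G \setminus \ker\chi$ substantially — the key input is a result of the Isaacs–Navarro–Wolf / Chillag flavour, or more elementarily the observation that since $N \not\leqslant \Center(\chi)$ for such $\chi$ (as $N$ is non-central — if $N$ were central $G$ would be nilpotent-by-... and one checks supersolvability), $\chi$ vanishes on some element of $N$, giving at least one zero per such $\chi$; one then needs to produce a second zero per character or count orbits carefully. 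The honest way is to count: the characters of $G$ not containing $N$ in their kernel number $|\Irr(G)| - |\Irr(G/N)|$, and I want to show each contributes enough zeros.

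The sharp bookkeeping is the heart of it: I would partition $\Irr(G)$ into $\Irr(G/N)$ (with few zeros — controlled because $G/N$ is supersolvable, and in fact by Theorem~C-type estimates $\anz(G/N)$ is small) and the ``faithful-on-$N$-part'' $\mathcal{F} = \Irr(G) \setminus \Irr(G/N)$, and prove $\sum_{\chi \in \mathcal{F}} (\text{number of zeros of }\chi) \geqslant \tfrac12 |\Irr(G)|$. For $\chi \in \mathcal{F}$, $\chi$ restricted to $N$ is a sum of $G$-conjugate nontrivial linear characters; the stabiliser $T = I_G(\lambda)$ is proper in $G$ because the $G$-action on $\Irr(N)\setminus\{1_N\}$ has no fixed points (again since $G/N$ acts and $N$ is the unique minimal normal subgroup — if some nontrivial $\lambda$ were $G$-invariant its kernel would be a smaller normal subgroup). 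Then $\chi$ vanishes on every element outside $T$ that is $N$-regular in the appropriate sense; quantitatively $\chi$ vanishes on at least $|G:T| - 1 \geqslant 1$ classes, and often more. To reach the constant $1/2$ one compares the number of such $\chi$ (which, by Clifford correspondence, is related to $|\Irr(T \mid \lambda)|$ summed over orbit representatives) against the number of zero-columns they force; the elementary abelian structure of $N$ with $a \geqslant 2$ guarantees at least $p^a - p^{a-1} \geqslant p$ nontrivial $N$-classes get hit.

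I expect the main obstacle to be precisely this last counting inequality — making the bound $\anz(G) \geqslant 1/2$ tight rather than merely positive. Getting \emph{a} zero per nonlinear character is Burnside and is easy; getting, on average, one zero per two irreducible characters (counting the linear ones and the lifts from $G/N$ that may contribute nothing) requires either (i) showing $|\Irr(G/N)|$ is not too large relative to $|\mathcal{F}|$, or (ii) showing each $\chi \in \mathcal{F}$ vanishes on \emph{two} classes — which is exactly the phenomenon Theorem~A exploits, so I would try to apply Theorem~A (or its proof technique) to the minimal non-abelian normal subgroup one gets by pulling back, forcing two zeros and then combining with a bound like $|\Irr(G/N)| \leqslant |\mathcal{F}|$ coming from supersolvability of $G/N$. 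Dealing with the small-index exceptional configurations (e.g.\ $p=2$, $a=2$, $|G:T|$ small) by direct inspection, benchmarked against $\anz(\mathrm{A}_4) = 1/2$, will be the fiddly endgame.
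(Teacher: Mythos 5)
Your reduction (Theorem~B for solvability, then a minimal counterexample $G$ with a unique minimal normal subgroup $N$, elementary abelian and non-cyclic, with $G/N$ supersolvable) is a reasonable start, but the decisive quantitative step is left open, and the tools you name for closing it do not apply. You hope to get two zeros per character in $\mathcal{F}=\Irr(G{\mid}N)$ by ``applying Theorem~A (or its proof technique) to the minimal non-abelian normal subgroup one gets by pulling back'' --- but $G$ is solvable here, so it has no non-abelian minimal normal subgroup, and Theorem~A (which is about products of non-abelian simple groups) is unavailable; this is exactly why the paper cannot reuse the Theorem~B mechanism for Theorem~C. Your other quantitative claim, that a $\chi$ lying over $\lambda\in\Irr(N)$ with stabiliser $T=I_G(\lambda)$ vanishes on at least $|G{:}T|-1$ classes, is also not right as stated: $\chi$ vanishes on the elements not conjugate into $T$, and since $T$ need not be normal this set can meet very few conjugacy classes. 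Without one of these inputs you are left with Burnside's one zero per non-linear character, which gives only $\anz(G)\geqslant |\mathcal{F}|/|\Irr(G)|$, and nothing in the proposal forces this to reach $\tfrac12$. (Your blanket monotonicity $\anz(G/N)\leqslant\anz(G)$ also needs the hypotheses of Lemma~\ref{noofzerosfactorgroup}, namely $N\leqslant G'$ and $\anz(G)<1$; in your minimal counterexample these do hold --- $N\leqslant G'$ because otherwise $N$ would be central of prime order --- but that should be said, not assumed.)

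The paper closes the gap by working with $G'$ rather than $N$ and running a dichotomy absent from your proposal. If some non-linear $\chi$ restricts irreducibly to $G'$, then Gallagher's lemma (Lemma~\ref{injectivemap}) produces $|G{:}G'|$ distinct non-linear characters $\beta\chi$, so the non-linear characters are at least as numerous as the linear ones and Burnside alone gives $\anz(G)\geqslant\tfrac12$. Otherwise every non-linear character is a relative $M$-character with respect to $G'$ (using that $G/N$ is supersolvable and $N$ is abelian), hence by Lemma~\ref{ZSS10Lemma2.1} is induced from a proper \emph{normal} subgroup $K$ with $G'\leqslant K<G$ and so vanishes on all of $G{\setminus}K$, i.e.\ on at least $|G{:}G'|-|K{:}G'|$ classes. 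One then compares the minimal number $\ell$ of vanishing classes with the number $b=|G{:}G'|$ of linear characters: $b\leqslant 2\ell-1$ and $b=2\ell$ are killed by direct counting together with Seitz's classification (Lemma~\ref{Sei68}) and Chillag's result in the one-class case (via Lemma~\ref{Lemma2(2)}), while $b\geqslant 2\ell+1$ contradicts the definition of $\ell$ via the divisibility estimate of Lemma~\ref{numbertheoryresult}. That normality of $K$, and the resulting coset count, is the ingredient your stabiliser-based Clifford analysis does not supply.
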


\begin{thmD}
Let $ G $ be a finite group. If $ \anz(G) < \frac{1}{3} $, then $ G $ is nilpotent.
\end{thmD}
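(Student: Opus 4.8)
The plan is to argue by contradiction, building on Theorem~C. Assume $\anz(G)<\tfrac{1}{3}$. Since $\tfrac{1}{3}<\tfrac{1}{2}$, Theorem~C already gives that $G$ is supersolvable, so it suffices to rule out the case that $G$ is supersolvable but not nilpotent. The strategy rests on three points: (i) a non-nilpotent supersolvable group has a quotient isomorphic to a Frobenius group $C_{p}\rtimes C_{m}$ with $1<m\mid p-1$; (ii) every such Frobenius group $F$ satisfies $\anz(F)\geq\tfrac{1}{3}$; and (iii) $\anz$ does not increase on passing to a quotient. Granting these, $\anz(G)\geq\anz(G/N)\geq\tfrac{1}{3}$ for the appropriate $N\normal G$, contradicting the hypothesis.

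For (i): if $G$ is not nilpotent it has a non-normal maximal subgroup $M$, and since $G$ is supersolvable $[G:M]=p$ is prime. Put $K=\mathrm{core}_{G}(M)\normal G$ and $H=G/K$, so that $\overline{M}:=M/K$ is a core-free subgroup of prime index $p$ in $H$. Taking a minimal normal subgroup $L$ of $H$, one checks that $L\not\leq\overline{M}$ (otherwise $L$ lies in the core), whence $L\overline{M}=H$ and $L\cap\overline{M}=1$; thus $|L|=p$, $\overline{M}$ is a complement to the normal subgroup $L\cong C_{p}$, and since $\overline{M}$ is core-free it acts faithfully — hence, $L$ being of prime order, fixed-point-freely — on $L$. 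Therefore $\overline{M}$ is cyclic of order $m$ with $1<m\mid p-1$, and $H\cong C_{p}\rtimes C_{m}$ is a Frobenius group with kernel $C_{p}$ and cyclic complement.

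For (ii): the irreducible characters of $F=C_{p}\rtimes C_{m}$ are the $m$ linear ones inflated from $F/C_{p}$ together with $(p-1)/m$ nonlinear characters of degree $m$, each induced from a nontrivial character of the Frobenius kernel and hence vanishing on every element outside the kernel, i.e.\ on each of the $m-1$ conjugacy classes of $F$ meeting $F\setminus C_{p}$. Since linear characters never vanish, $\nz(F)\geq\frac{(p-1)(m-1)}{m}$ while $|\Irr(F)|=m+\frac{p-1}{m}$, so
\[
\anz(F)\;\geq\;\frac{(p-1)(m-1)}{m^{2}+p-1}.
\]
Setting $q=p-1$ and using $m\geq 2$ together with $m\mid p-1$ (so $q\geq m$), the bound $\frac{(p-1)(m-1)}{m^{2}+p-1}\geq\tfrac{1}{3}$ rearranges to $q(3m-4)\geq m^{2}$, which follows at once from $q\geq m$ and $m\geq 2$; equality forces $m=q=2$, that is $F\cong\mathrm{S}_{3}$, where indeed $\anz(\mathrm{S}_{3})=\tfrac{1}{3}$. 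Hence $\anz(F)\geq\tfrac{1}{3}$ in all cases.

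The real content is (iii). Inflation identifies $\Irr(G/N)$ with a subset of $\Irr(G)$, and a character of $G/N$ vanishing on a class of $G/N$ inflates to a character of $G$ vanishing on the whole preimage coset, hence on at least as many classes of $G$; so the inflated characters alone account for at least $\nz(G/N)$ zeros of $G$. The obstacle is that $G$ has $|\Irr(G)|-|\Irr(G/N)|$ further irreducible characters, which enlarge the denominator, and one must show the extra zeros they produce compensate for this. All but a controlled number of these extra characters are nonlinear — the exceptions being the linear characters of $G$ not having $N$ in their kernel, which correspond to $\Irr(G/G')\setminus\Irr(G/G'N)$ — and each nonlinear character vanishes on at least one class by Burnside's theorem, in fact on rather many when its degree is large. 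I expect the careful bookkeeping that upgrades these crude bounds into the inequality $\anz(G)\geq\anz(G/N)$ — balancing the new linear characters against the zeros forced by the new nonlinear ones, and against zeros coming from classes of $G$ that fuse modulo $N$ — to be the main difficulty; points (i) and (ii) are comparatively routine. With (iii) in hand (or even a weaker version adapted to the specific quotient), applying it to $N=K$ yields $\anz(G)\geq\anz(G/K)=\anz(F)\geq\tfrac{1}{3}$, the desired contradiction, so $G$ must be nilpotent.
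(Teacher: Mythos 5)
There is a genuine gap, and you have put your finger on it yourself: step (iii), the monotonicity $\anz(G)\geq\anz(G/N)$ for an arbitrary normal subgroup $N$, is asserted (with a sketch of the difficulties) but never proved, and your entire argument funnels through it. The version of this statement that is actually available (Lemma \ref{noofzerosfactorgroup} in the paper) requires $K\leqslant G'$, and that hypothesis is essential: it is what guarantees that every character in $\Irr(G\,{\mid}\,K)$ is non-linear, hence vanishes somewhere by Burnside, so that the new characters contribute at least as many zeros as they add to the denominator. Your $K=\mathrm{core}_{G}(M)$ need not lie in $G'$. For instance, in $G=\mathrm{S}_{3}\times C_{2}$ with $M=\langle(12)\rangle\times C_{2}$ one gets $K=1\times C_{2}\not\leqslant G'=\mathrm{A}_{3}\times 1$; the inequality happens to hold there, but you have no argument covering such cases, and when $K\not\leqslant G'$ the set $\Irr(G\,{\mid}\,K)$ contains linear characters contributing zero to $\nz(G\,{\mid}\,K)$, so the paper's computation $\frac{m+n}{c+d}\geqslant\frac{m}{c}$ breaks down. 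Steps (i) and (ii) are correct and cleanly done (the verification that $C_{H}(L)\cap\overline{M}$ is normal in $H$, and the inequality $(p-1)(3m-4)\geqslant m^{2}$, both check out), but they carry no weight until (iii) is supplied.

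For comparison, the paper does not attempt your reduction to a Frobenius quotient at all. After invoking Theorem C for supersolvability, it argues directly on $G$: either some non-linear $\chi$ has $\chi_{G'}$ irreducible (then Lemma \ref{injectivemap} forces $\anz(G)\geqslant\frac{1}{2}$), or every non-linear character is induced from a proper subgroup $K\geqslant G'$ (Lemma \ref{ZSS10Lemma2.1}), and then a case analysis on the minimal number $\ell$ of vanishing classes, using Chillag's result, Seitz's classification (Lemma \ref{Sei68}) and the arithmetic Lemma \ref{numbertheoryresult}, yields $\anz(G)\geqslant\frac{1}{3}$ in every case. If you want to salvage your own route, the cheapest repair is to replace the quotient by $\mathrm{core}_{G}(M)$ with an induction on $|G|$ using a minimal normal subgroup $N\leqslant G'$, where Lemma \ref{noofzerosfactorgroup} does apply; but as written, the proof is incomplete.
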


Note that for an abelian group $ G $, $ \acd(G) = 1 $ and $ \anz(G)=0 $. We show in Remark \ref{Abeliangroups} that there is no  number $ c > 1 $ such that $ \acd(G) < c $ implies that $ G $ is abelian. Contrary to $ \acd(G) $, we show that there exists a bound $ c > 0 $, such that $ \anz(G) < c $ implies that $ G $ is abelian. Hence we obtain a non-trivial characterisation of abelian groups in terms of zeros of characters:

\begin{thmE}
Let $ G $ be a finite group. Then $ G $ is abelian if and only if $ \anz(G) < \frac{1}{3} $.
\end{thmE}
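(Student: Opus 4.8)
The plan is this. One direction is trivial: if $G$ is abelian then every irreducible character is linear and vanishes nowhere, so $\anz(G)=0<\frac13$. For the converse, assume $\anz(G)<\frac13$. First I would invoke Theorem D to get that $G$ is nilpotent, say $G=P_1\times\cdots\times P_r$ with the $P_i$ its Sylow subgroups. A direct computation with characters of a product $A\times B$ (using $(\alpha\times\beta)(a,b)=\alpha(a)\beta(b)$, so $\alpha\times\beta$ vanishes at $(a,b)$ iff $\alpha$ vanishes at $a$ or $\beta$ vanishes at $b$) yields $\nz(A\times B)=k(B)^2\nz(A)+\nz(B)\bigl(k(A)^2-\nz(A)\bigr)$ and $k(A\times B)=k(A)k(B)$; since $\nz(A)\le k(A)^2$ this gives $\anz(A\times B)\ge k(B)\,\anz(A)\ge\anz(A)$. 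Iterating, $\anz(G)\ge\anz(P_i)$ for each $i$, so if $G$ is nonabelian then some $P_i$ is a nonabelian $p$-group, and everything reduces to the claim that \emph{every nonabelian $p$-group $P$ satisfies $\anz(P)\ge\frac13$}, i.e.\ $\nz(P)\ge k(P)/3$. I would prove this by induction on $|P|$.

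For the inductive step I would pick $N\le P'\cap\Center(P)$ of order $p$ (possible since $P'$ is a nontrivial normal subgroup of a $p$-group) and set $\bar P=P/N$. The characters of $P$ containing $N$ in their kernel are exactly the inflations of the $k(\bar P)$ characters of $\bar P$; the remaining $m':=k(P)-k(\bar P)\ge 1$ characters have $N\not\le\ker\chi$, hence (as $N\le P'$) are nonlinear and so vanish on at least one class each by Burnside's theorem. If $|P'|>p$ then $\bar P$ is nonabelian of smaller order, so by induction $\nz(\bar P)\ge k(\bar P)/3$; and since $\chi(g)=\bar\chi(gN)$ for $\chi=\operatorname{inf}(\bar\chi)$ and the map on conjugacy classes $P\to\bar P$ is onto, each inflated character vanishes on at least as many classes of $P$ as $\bar\chi$ does on $\bar P$. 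Hence $\nz(P)\ge\nz(\bar P)+m'\ge k(\bar P)/3+m'$ while $k(P)=k(\bar P)+m'$, and $\frac{k(\bar P)/3+m'}{k(\bar P)+m'}\ge\frac13$, so this case is done.

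There remains the case $|P'|=p$ (which contains the base case $|P|=p^3$). Now $P'\le\Center(P)$, and a nonlinear $\chi\in\Irr(P)$ has $\ker\chi\cap P'=1$, so $\chi_{P'}=\chi(1)\lambda$ with $\lambda$ a faithful character of $P'\cong C_p$. For noncentral $g$ choose $x$ with $z:=[g,x]\ne 1$; then $z\in P'\le\Center(P)$, $g^x=gz$, and $\chi(g)=\chi(gz)=\lambda(z)\chi(g)$ with $\lambda(z)\ne 1$, forcing $\chi(g)=0$. So every nonlinear $\chi$ vanishes on exactly the $k(P)-|\Center(P)|$ noncentral classes. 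Since all noncentral classes have size $p$ we get $k(P)=|\Center(P)|+(|P|-|\Center(P)|)/p$, there are $|P|/p$ linear characters, hence $m:=|\Center(P)|(p-1)/p$ nonlinear ones, and $\nz(P)=m\bigl(k(P)-|\Center(P)|\bigr)$. Substituting, the required inequality $\nz(P)\ge k(P)/3$ becomes an elementary inequality in $|P|$ and $|\Center(P)|$ which holds because $|P|\ge p^3$ and $|P:\Center(P)|\ge p^2$ ($P/\Center(P)$ is noncyclic); in fact one obtains $\anz(P)\ge\frac35$, with equality for $D_8$ and $Q_8$.

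The routine parts are the direct-product identity and the reduction to a nonabelian Sylow subgroup. The crux is the $p$-group lemma, and the two points to get right there are: running the induction over a \emph{central} chief factor, so that zeros are inherited under inflation and one only needs each newly appearing character to carry a single zero; and isolating the case $|P'|=p$, where the vanishing-off-the-centre phenomenon together with an explicit class count finishes it. Verifying the final numerical inequality when $|P'|=p$ is the only spot where a short computation is genuinely unavoidable.
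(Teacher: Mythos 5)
Your proof is correct, and it takes a genuinely different route from the paper. The paper also starts from Theorem D to get nilpotence, but then inducts on $|G|$ to arrange that $G'$ is a minimal normal subgroup with $G/G'$ abelian, and finishes with the same machinery used for Theorems C and D: every nonlinear character is induced from a proper subgroup $K\supseteq G'$ and vanishes on all of $G\setminus K$ (Lemma \ref{ZSS10Lemma2.1}), Seitz's classification of groups with a unique nonlinear character (Lemma \ref{Sei68}), and the arithmetic Lemma \ref{numbertheoryresult}. You instead reduce to Sylow subgroups via the identity $\nz(A\times B)=k(B)^2\nz(A)+\bigl(k(A)^2-\nz(A)\bigr)\nz(B)$, which gives $\anz(A\times B)\geqslant k(B)\,\anz(A)\geqslant \anz(A)$, and then prove the standalone fact that a nonabelian $p$-group $P$ satisfies $\anz(P)\geqslant \frac{3}{5}$: the induction over a central subgroup of order $p$ inside $P'$ is sound (inflation preserves zeros classwise because the class map is onto, and each of the $k(P)-k(P/N)$ new characters is nonlinear and contributes at least one zero by Burnside), and in the terminal case $|P'|=p$ your vanishing-off-the-centre argument and the class count $k(P)=|Z(P)|+(|P|-|Z(P)|)/p$ check out, with the minimum $\frac{3}{5}$ attained at $D_8$ and $Q_8$. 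Your approach buys self-containedness (no appeal to Seitz's theorem or to the relative $M$-character lemma), a sharper constant for the $p$-group case, and a direct-product inequality for $\anz$ of independent interest; the paper's approach buys uniformity, since the same template proves Theorems C, D and E with only the threshold changed.
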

When $ G $ is of odd order we obtain a bound that is better than that in Theorem C. The following result is analogous to \cite[Theorem D(a)]{ILM13}.

\begin{thmF}
Let $ G $ be a finite group of odd order. If $ \anz(G) < 1 $, then $ G $ is supersolvable.
\end{thmF}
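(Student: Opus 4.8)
The plan is a minimal‑counterexample argument. Suppose the statement fails, and let $G$ be a counterexample of least order: $|G|$ is odd, $\anz(G)<1$, and $G$ is not supersolvable. By Theorem B (or, since $|G|$ is odd, by Feit--Thompson) $G$ is solvable. I would first record that $\anz$ does not increase under passing to quotients (a fact that underlies all of Theorems B--E as well), so every proper quotient of $G$ again satisfies the hypothesis and is therefore supersolvable by minimality. Since supersolvable groups are closed under subgroups and finite direct products, $G$ has a \emph{unique} minimal normal subgroup $N$; as $G$ is solvable, $N$ is elementary abelian of order $p^{a}$ for an odd prime $p$; since $G/N$ is supersolvable but $G$ is not, $N$ is not cyclic, so $a\geq 2$; and $N$ is not central (else $G$ would have a normal subgroup of order $p$, contradicting uniqueness), so $C:=\Centralizer_{G}(N)$ is proper, $N\leq C\normal G$, and $H:=G/C$ acts faithfully and $\F_{p}$-irreducibly on $N$, with $|H|$ odd and $|H|>1$. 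Because supersolvability is a saturated formation, $N\not\leq\Phi(G)$, so $N$ has a complement and one may take $G=N\rtimes K$ with $K\cong G/N$ supersolvable.

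Next I would estimate $\nz(G)$ through Clifford theory over $N$. Partition $\Irr(G)=\Irr(G/N)\sqcup\mathcal V$, where $\mathcal V$ consists of the irreducible characters not having $N$ in their kernel, so $k(G)=k(G/N)+|\mathcal V|$. The inflations of the characters in $\Irr(G/N)$ account for at least $\nz(G/N)$ of the zeros, since each class of $G/N$ on which a character vanishes pulls back to at least one class of $G$ on which its inflation vanishes. Each $\chi\in\mathcal V$ lies over a linear $\theta\in\Irr(N)\setminus\{1_{N}\}$; as $H$ acts faithfully and irreducibly on the dual module $\Irr(N)$, the only $H$-fixed point is $1_{N}$, so $\theta$ is not $G$-invariant, its inertia group $I:=I_{G}(\theta)$ is a proper subgroup of $G$ containing $N$ with $|G:I|>1$ odd, and by the Clifford correspondence $\chi=\psi^{G}$ for some $\psi\in\Irr(I\mid\theta)$; hence $\chi$ vanishes on \emph{every} conjugacy class of $G$ that meets no conjugate of $I$. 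Oddness of $|G|$ enters here twice, and is precisely what upgrades the supersolvability threshold from $\tfrac12$ (Theorem C) to $1$: every $\chi\in\mathcal V$ has odd degree $>1$, hence degree $\geq 3$, and every $H$-orbit on $\Irr(N)\setminus\{1_{N}\}$ has odd size $\geq 3$.

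Grouping $\mathcal V$ by the $H$-orbit $O$ of the underlying $\theta$, with inertia group $I_{O}$, the above yields
\[
\nz(G)\ \geq\ \nz(G/N)\ +\ \sum_{O}\bigl|\Irr(I_{O}\mid\theta_{O})\bigr|\,d(I_{O}),
\]
where $d(J)$ is the number of conjugacy classes of $G$ disjoint from every conjugate of $J$. Since $k(G)=k(G/N)+\sum_{O}\bigl|\Irr(I_{O}\mid\theta_{O})\bigr|$, the desired contradiction $\anz(G)\geq 1$ follows once one proves $\sum_{O}\bigl|\Irr(I_{O}\mid\theta_{O})\bigr|\,(d(I_{O})-1)\geq k(G/N)-\nz(G/N)$, and \emph{this combinatorial estimate is the crux, the step I expect to be the main obstacle}. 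The naive bound $d(I_{O})\geq k(G)-k(I_{O})$ is useless because $I_{O}$ contains the (large) normal subgroup $N$, so one must instead count the $G$-classes inside $\bigcup_{g}I_{O}^{g}$ directly. When $H$ is cyclic --- which holds automatically if $G/N$ is abelian, and then forces every $I_{O}\normal G$ --- one has $d(I_{O})=k(G)-\#\{G\text{-classes in }I_{O}\}$, and the number of $G$-classes inside $N$ equals $1$ plus the number of $H$-orbits on $N\setminus\{0\}$, which by Brauer's permutation lemma equals the number of $H$-orbits on $\Irr(N)\setminus\{1_{N}\}$; since $a\geq 2$ and $|H|$ divides $|\GL_{a}(p)|$, this orbit count is large enough relative to $k(G/N)=|G/N|$ for the inequality to hold with room to spare (the tight case being essentially $G=C_{5}^{2}\rtimes C_{3}$, where $\anz=16/11$). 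When $G/N$ is non-abelian, one has the extra input $\nz(G/N)\geq\tfrac13k(G/N)$ from Theorem E, so the target drops to $\tfrac23k(G/N)$; the price is that $H$ need not be cyclic, the $I_{O}$ need not be normal, and one must bound $d(I_{O})$ more carefully --- using that $\bigcup_{g}I_{O}^{g}$ omits at least $|G:I_{O}|-1\geq 2$ elements together with control on $G$-class sizes coming from $N\leq I_{O}$, or, if needed, by a further reduction exploiting $\anz(G/N)<1$ inductively. Either way one reaches $\anz(G)\geq 1$, contradicting the choice of $G$ and completing the proof.
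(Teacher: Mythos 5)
Your reduction to a minimal counterexample (unique non-cyclic, non-central, complemented minimal normal subgroup $N$, faithful irreducible odd-order action of $H=G/\Centralizer_G(N)$, orbit sizes and induced-character degrees at least $3$) is sound, and the Clifford-theoretic bookkeeping $k(G)=k(G/N)+\sum_O m_O$, $\nz(G)\geq \nz(G/N)+\sum_O m_O\,d(I_O)$ is correct. But the proof does not close: everything hinges on the inequality $\sum_O m_O\bigl(d(I_O)-1\bigr)\geq k(G/N)-\nz(G/N)$, which you yourself flag as ``the crux'' and then support only with heuristics (``large enough \ldots with room to spare'', ``bound $d(I_O)$ more carefully \ldots or, if needed, by a further reduction''). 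This is a genuine gap, not a routine verification. The inequality is delicate: for the supersolvable Frobenius group of order $21$ (where $N$ is cyclic, so it escapes your setup only via the non-cyclicity reduction) the same sum gives $2<3$, so the estimate really does depend on the extra structure of the counterexample and cannot be waved through. Moreover, when $H$ is non-cyclic and the inertia groups $I_O$ are non-normal, your only general lower bound on $d(I_O)$ --- that $\bigcup_g I_O^g$ omits at least $|G{:}I_O|-1$ elements --- yields only $d(I_O)\geq 1$, i.e.\ $d(I_O)-1\geq 0$, which contributes nothing. As written, the argument establishes no contradiction.

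For comparison, the paper avoids this counting problem entirely by leaning on classification theorems. It first handles characters vanishing on exactly two classes via Qian's classification of odd-order groups admitting such a character, combined with P\'alfy's classification of groups with exactly two non-linear irreducible characters (a group of odd order has an even number of them); the only surviving group is the supersolvable group of order $21$. It then pushes the minimal number $\ell$ of vanishing classes to at least $3$ and, using that $G$ is an $M$-group, produces a normal $K$ with $G'\leq K<G$ and $G{\setminus}K\subseteq\upsilon(\varphi)$ for every non-linear $\varphi$, after which an elementary divisor estimate on $|G{:}G'|$ versus $|K{:}G'|$ forces $\anz(G)\geq 1$. If you want to salvage your route, you would need to prove your key inequality outright --- most plausibly by first disposing of the small cases via exactly the classification results the paper uses.
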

However, the bound in Theorem F might not be optimal. Indeed, in \cite[Theorem D(a)]{ILM13}, the bound is best possible since there exists a group $ G $ of odd order which is not supersolvable such that $ \acd(G)=\frac{27}{11} $ and also $ \anz(G)=\frac{16}{11} $ ($ G $ is the unique non-abelian group of order $ 75 $). Hence we propose the following conjecture:

\begin{conj1}
Let $ G $ be a finite group of odd order. If $ \anz(G) < \frac{16}{11} $, then $ G $ is supersolvable.
\end{conj1}

The article is organized as follows. In Section \ref{pre}, we list some preliminary results. In Section \ref{solvableandnonsolvable}, we prove Theorems A and B and in Section \ref{supersolvableuptoabelian} we prove Theorems C to F. We also show that there exist an infinite family of non-abelian nilpotent groups $ G $ such that $ \acd(G) < \frac{4}{3} $ in this last section.

\section{Preliminaries}\label{pre}

\begin{lemma}\label{extendible}
Suppose that $ N $ is a minimal normal non-abelian subgroup of a group $ G $. Then there exists an irreducible character $ \theta $ of $ N $ such that $ \theta $ is extendible to $ G $ with $ \theta (1)\geqslant 5 $. In particular, if $ N $ is simple such that
\begin{itemize}
\item[(a)] $ N $ is isomorphic to $ \AAA_{5} $, then $ \theta(1)=5 $
\item[(b)] $ N $ is isomorphic to $ \AAA_{n} $, $ n\geqslant 7 $, then $ \theta(1)=n-1 $
\item[(c)] $ N $ is isomorphic to a finite group of Lie type defined over a finite field distinct from the Tits group $ ^{2}\mathrm{F}_{4}(2)' $ and $ \PSL_{2}(5) $, then $ \theta $ is the Steinberg character.
\end{itemize}
\end{lemma}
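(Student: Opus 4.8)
\emph{Proof plan.} The plan is to reduce to the case where $N$ is simple, quote the needed information about irreducible characters of finite simple groups, and then transport it up to $N$ by a wreath-product construction. Since $N$ is a minimal normal non-abelian subgroup, $N$ is characteristically simple and non-abelian, so we may write $N=S_{1}\times\cdots\times S_{k}$ with each $S_{i}\cong S$ for a fixed non-abelian finite simple group $S$; in particular $\Center(N)=1$. Setting $C:=\Centralizer_{G}(N)$, the conjugation action of $G$ on $N$ has kernel $C$, and since $N\cap C=\Center(N)=1$ it induces embeddings $N\cong NC/C\leqs G/C\into\Aut(N)=\Aut(S)\wr\Sym(k)$, with $N$ identified with $\Inn(N)$ inside $\Aut(N)$.

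The key ingredient is the following statement about simple groups, which I would assemble from the literature and the \Atlas: every non-abelian finite simple group $S$ admits an $\Aut(S)$-invariant irreducible character $\theta_{0}$ with $\theta_{0}(1)\geqs 5$ that \emph{extends} to $\Aut(S)$, and $\theta_{0}$ may be taken as follows.
\begin{itemize}
\item[(a)] If $S\cong\AAA_{5}$, take the (unique) irreducible character of degree $5$; it is the restriction of a degree-$5$ character of $\SSS_{5}=\Aut(\AAA_{5})$, hence extends.
\item[(b)] If $S\cong\AAA_{n}$ with $n\geqs 7$, take the deleted natural permutation character, of degree $n-1$; being the restriction of the standard (irreducible, non-self-conjugate) character of $\SSS_{n}=\Aut(\AAA_{n})$, it is irreducible, $\Aut(\AAA_{n})$-invariant, and extends.
\item[(c)] If $S$ is of Lie type other than $\PSL_{2}(4)\cong\AAA_{5}$, take the Steinberg character $\mathrm{St}_{S}$; its degree equals $|S|_{p}$, the order of a Sylow $p$-subgroup of $S$ for $p$ the defining characteristic, which is $\geqs 5$ with the single exception $S\cong\PSL_{2}(4)$, and it is $\Aut(S)$-invariant and extends to $\Aut(S)$.
\end{itemize}
When $S$ is sporadic or the Tits group $^{2}\mathrm{F}_{4}(2)'$, one has $|\Out(S)|\leqs 2$ and a suitable $\theta_{0}$ is read off the \Atlas (the smallest nontrivial degree already exceeds $5$ in every such case).

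Granting this, I would finish as follows. Fix an extension $\widehat{\theta}_{0}$ of $\theta_{0}$ to $\Aut(S)$, afforded by a module $V$. Then $V^{\otimes k}$ carries a representation of $\Aut(S)\wr\Sym(k)$, with $\Aut(S)^{k}$ acting factorwise and $\Sym(k)$ permuting the tensor factors, whose restriction to $N=S^{k}=\Inn(N)$ affords $\theta:=\theta_{0}\times\cdots\times\theta_{0}\in\Irr(N)$, of degree $\theta_{0}(1)^{k}\geqs 5$. Thus $\theta$ extends to $\Aut(N)$; restricting such an extension along the embedding $G/C\into\Aut(N)$ gives a character of $G/C$ whose restriction to $NC/C\cong N$ equals the irreducible character $\theta$, hence that character is itself irreducible, and its inflation to $G$ is an irreducible character of $G$ restricting to $\theta$ on $N$ with $\theta(1)\geqs 5$. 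Reading off degrees in cases (a)--(c) (where $N$ is simple, i.e. $k=1$) gives the stated values $5$, $n-1$, and $\mathrm{St}_{S}(1)$ respectively.

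The main obstacle is the simple-group input — in particular the fact that the Steinberg character extends to the full automorphism group of every simple group of Lie type, which is a genuine theorem rather than a formality, together with the finite but non-vacuous check for the sporadic groups and the Tits group. Two coincidences need care in the bookkeeping: $\AAA_{5}\cong\PSL_{2}(4)\cong\PSL_{2}(5)$, where the Steinberg degree under the ``$\PSL_{2}(4)$'' labelling is only $4$, which is why $\AAA_{5}$ (equivalently $\PSL_{2}(5)$) is excluded from (c) and handled via its degree-$5$ character; and $\AAA_{6}\cong\PSL_{2}(9)$, where $\Aut(\AAA_{6})\neq\SSS_{6}$, so one uses the Steinberg character of $\PSL_{2}(9)$ (degree $9$) rather than a restriction from a symmetric group.
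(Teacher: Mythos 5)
Your proposal is correct, but note that the paper itself gives no argument here: it simply quotes \cite[Theorem 1.1]{MT-V11} for the first assertion and refers to that theorem's proof for the list (a)--(c). What you have written is essentially a reconstruction of the Magaard--Tong-Viet proof itself --- reduction to the simple case via $\Centralizer_G(N)$ and the embedding $G/\Centralizer_G(N)\into\Aut(S)\wr\Sym(k)$, the tensor-induced module $V^{\otimes k}$ to extend $\theta_0\times\cdots\times\theta_0$, and the standard simple-group input (Steinberg character extendibility, the deleted permutation character for $\AAA_n$, and \Atlas{} checks for the sporadic groups and the Tits group) --- so it matches the intended route; the only soft spot is that for the sporadic and Tits cases you need not just a degree bound but an $\Aut(S)$-invariant character, whose extendibility then follows since $|\Out(S)|\leqs 2$ is cyclic, a point you flag but do not fully spell out.
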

\begin{proof}
The first assertion is the statement of \cite[Theorem 1.1]{MT-V11} and the second assertion follows from the proof of \cite[Theorem 1.1]{MT-V11}.
\end{proof}

\begin{lemma}\cite[Corollary 6.17]{Isa06}\label{injectivemap} Let $ N $ be a normal subgroup of $ G $ and let $ \chi \in \Irr(G) $ be such that $ \chi _{N}=\theta \in \Irr(N) $. Then the characters $ \beta \chi $ for $ \beta \in \Irr (G/N) $ are irreducible, distinct for distinct $ \beta $ and are all of the irreducible constituents of $ \theta ^{G} $.
\end{lemma}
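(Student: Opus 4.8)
The plan is to recognise the statement as Gallagher's theorem and to prove it in three stages: first that $\theta$ is invariant in $G$, together with a degree count on $\theta^{G}$; second that multiplication by $\chi$ embeds $\Irr(G/N)$ isometrically into $\Irr(G)$; third a comparison of degrees showing the image is all of the constituents of $\theta^{G}$.

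First I would observe that, since $\chi$ restricts irreducibly to $N$, for every $g\in G$ we have $\theta^{g}=(\chi^{g})_{N}=\chi_{N}=\theta$, so $\theta$ is $G$-invariant. Clifford's theorem then gives $\psi_{N}=(\psi(1)/\theta(1))\theta$ for every $\psi\in\Irr(G)$ lying over $\theta$, with $[\psi,\theta^{G}]=[\psi_{N},\theta]=\psi(1)/\theta(1)$; comparing degrees in $\theta^{G}=\sum_{\psi}(\psi(1)/\theta(1))\psi$ yields $\sum_{\psi}(\psi(1)/\theta(1))^{2}=|G:N|$, a count I will reuse at the end.

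Next, for $\beta\in\Irr(G/N)$ inflated to $G$, the product $\beta\chi$ is a character of $G$ with $(\beta\chi)_{N}=\beta(1)\theta$; in particular it lies over $\theta$, so its irreducible constituents are among those of $\theta^{G}$, and $[\beta\chi,\theta^{G}]=[\beta(1)\theta,\theta]=\beta(1)>0$. The heart of the proof is the identity $[\beta_{1}\chi,\beta_{2}\chi]_{G}=[\beta_{1},\beta_{2}]_{G/N}$ for $\beta_{1},\beta_{2}\in\Irr(G/N)$. I would prove this by grouping the defining sum over cosets of $N$: since each $\beta_{i}$ is constant on cosets of $N$, it suffices to show $\frac{1}{|N|}\sum_{n\in N}|\chi(nx)|^{2}=1$ for every $x\in G$. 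Fixing a unitary representation $X$ affording $\chi$, this reduces to identifying $\frac{1}{|N|}\sum_{n\in N}X(n)\otimes\overline{X(n)}$ as the projection onto the space of $N$-fixed vectors in $\theta\otimes\overline{\theta}$; because $\chi_{N}=\theta$ is irreducible this space is one-dimensional (as $[\theta\overline{\theta},1_{N}]=[\theta,\theta]=1$), and tracing the projection against $X(x)\otimes\overline{X(x)}$ gives $1$. This is the step I expect to be the main obstacle: it is the only place the irreducibility hypothesis genuinely enters, and one must handle the bookkeeping of the rank-one tensor projection carefully — alternatively one can invoke the generalised orthogonality relations for $N$. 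Granting the isometry, each $\beta\chi$ has norm $1$ and so, being a genuine character, lies in $\Irr(G)$, while distinct $\beta$ give orthogonal, hence distinct, characters $\beta\chi$.

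Finally I would compare degrees once more: $\sum_{\beta\in\Irr(G/N)}(\beta\chi)(1)^{2}=\chi(1)^{2}\sum_{\beta}\beta(1)^{2}=\theta(1)^{2}|G/N|$, which by the first stage equals $\sum_{\psi}\psi(1)^{2}$ taken over all irreducible constituents $\psi$ of $\theta^{G}$. Since the $\beta\chi$ are already distinct constituents of $\theta^{G}$ whose squared degrees account for this entire total, no further constituents can occur, so the $\beta\chi$ are exactly the irreducible constituents of $\theta^{G}$ (and, incidentally, $\theta^{G}=\sum_{\beta}\beta(1)\,\beta\chi$).
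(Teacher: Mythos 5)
Your proposal is a correct and complete proof of what is in fact Gallagher's theorem; note that the paper itself offers no proof here but simply cites \cite[Corollary 6.17]{Isa06}, so there is no in-paper argument to compare against. Your three stages are all sound: the invariance of $\theta$ and the count $\sum_\psi(\psi(1)/\theta(1))^2=|G{:}N|$ via Clifford theory and Frobenius reciprocity; the isometry $[\beta_1\chi,\beta_2\chi]_G=[\beta_1,\beta_2]_{G/N}$, correctly reduced to $\tfrac{1}{|N|}\sum_{n\in N}|\chi(nx)|^2=1$ and justified by identifying $\tfrac{1}{|N|}\sum_n X(n)\otimes\overline{X(n)}$ as the rank-one projection onto the one-dimensional space of $N$-invariants of $\theta\overline{\theta}$ (this is exactly where $\chi_N\in\Irr(N)$ is used); and the final degree comparison showing no constituents of $\theta^G$ are missed. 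The one remark worth making is that the standard textbook route is shorter: the projection formula gives $\theta^G=(\chi_N)^G=\chi\cdot(1_N)^G=\sum_{\beta\in\Irr(G/N)}\beta(1)\,\beta\chi$, and then the single computation $[\theta^G,\theta^G]=[(\theta^G)_N,\theta]=|G{:}N|=\sum_\beta\beta(1)^2$ forces every $\beta\chi$ to be irreducible and the $\beta\chi$ to be pairwise distinct, all in one stroke. Your argument buys an explicit orthogonality statement for the products $\beta\chi$ at the cost of the tensor-projection bookkeeping; either is acceptable.
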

Recall that $ \Irr(G{\mid} K)= \{\chi \in \Irr(G): K\nsubseteq  \ker  \chi\} $. For $ \chi \in \Irr (G) $ we write  $ n\upsilon(\chi )$ for the number of conjugacy classes of  $G$ on which $\chi$  vanishes. Then $ \nz(G{\mid} K):=\sum _{\chi \in \Irr(G\mid K)}n\upsilon(\chi)  $.

\begin{lemma}\label{noofzerosfactorgroup}
Let $ K $ be a normal subgroup of $ G $ such that $ K\leqslant G' $. If $ \anz(G) < 1 $, then $ \anz(G/K)\leqslant \anz(G) $.
\end{lemma}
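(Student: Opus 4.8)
The plan is to split $\Irr(G)$ as the disjoint union of $\Irr(G/K)$ (the irreducible characters of $G$ with $K$ in their kernel) and $\Irr(G\mid K)$, and to compare the contributions of the two pieces to $\nz(G)$. First I would record two elementary facts. Since $K\leqslant G'$, every linear character of $G$ is trivial on $K$, so every $\chi\in\Irr(G\mid K)$ is non-linear; by Burnside's theorem \cite[Theorem 3.15]{Isa06} such a $\chi$ vanishes on at least one conjugacy class, whence
\[
\nz(G\mid K)=\sum_{\chi\in\Irr(G\mid K)}n\upsilon(\chi)\ \geqslant\ |\Irr(G\mid K)|.
\]
Second, for $\chi\in\Irr(G/K)$ I would compare the number of zeros of $\chi$ viewed as a character of $G$ with the number of its zeros viewed as a character of $G/K$. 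The projection $G\to G/K$ sends each conjugacy class of $G$ onto a conjugacy class of $G/K$ and the induced map on classes is surjective; since $\chi(g)=0$ exactly when $\chi(gK)=0$, choosing for each $G/K$-class on which $\chi$ vanishes a $G$-class mapping onto it produces an injection into the set of $G$-classes on which (the inflation of) $\chi$ vanishes. Hence $n\upsilon_{G/K}(\chi)\leqslant n\upsilon_{G}(\chi)$, and summing over $\chi\in\Irr(G/K)$ gives $\nz(G/K)\leqslant\sum_{\chi\in\Irr(G/K)}n\upsilon_{G}(\chi)$.

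Combining these two estimates with the identity $\nz(G)=\sum_{\chi\in\Irr(G/K)}n\upsilon_{G}(\chi)+\nz(G\mid K)$ yields
\[
\nz(G)\ \geqslant\ \nz(G/K)+|\Irr(G\mid K)|.
\]
Now I would write $|\Irr(G)|=|\Irr(G/K)|+|\Irr(G\mid K)|$, substitute $\nz(G)=\anz(G)\,|\Irr(G)|$ and $\nz(G/K)=\anz(G/K)\,|\Irr(G/K)|$ into the displayed inequality, and rearrange to obtain
\[
\bigl(\anz(G)-\anz(G/K)\bigr)\,|\Irr(G/K)|\ \geqslant\ \bigl(1-\anz(G)\bigr)\,|\Irr(G\mid K)|.
\]
The right-hand side is non-negative because $\anz(G)<1$, and $|\Irr(G/K)|\geqslant 1$, so $\anz(G/K)\leqslant\anz(G)$, as claimed. (If $K=1$ there is nothing to prove; if $K\neq1$ then $|\Irr(G\mid K)|\geqslant 1$ and the conclusion is in fact strict.)

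I do not expect a serious obstacle; the only two points that need care are the claim that passing from $G$ to $G/K$ can only merge conjugacy classes and never manufacture a new zero of a character inflated from $G/K$ (so the zero-count does not increase), and the observation — which is precisely where the hypothesis $K\leqslant G'$ is used — that $\Irr(G\mid K)$ contains no linear character, so that Burnside's theorem applies to every character in it and gives the bound $\nz(G\mid K)\geqslant|\Irr(G\mid K)|$.
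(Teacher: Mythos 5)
Your proposal is correct and follows essentially the same route as the paper: both split $\Irr(G)$ into $\Irr(G/K)\sqcup\Irr(G\mid K)$, use $K\leqslant G'$ plus Burnside's theorem to get $\nz(G\mid K)\geqslant|\Irr(G\mid K)|$, and observe that inflation from $G/K$ cannot decrease the count of vanishing classes. Your final algebraic rearrangement is in fact a slightly cleaner way to finish than the paper's chain of fraction inequalities, but the substance is identical.
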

\begin{proof}
Let $ a=\anz(G/K) $. Note that $ k(G)=|\Irr (G)|=|\Irr(G/K)|+|\Irr(G{\mid} K)| $. If $ |\Irr(G/K)|=c $, $ |\Irr(G{\mid} K)|=d $, $ \nz(G/K)=m $ and $ \nz(G{\mid} K)=n $, then $ a=\frac{m}{c} $. Since $ K\leqslant G' $, $ \Irr(G{\mid} K) $ is a set of non-linear irreducible characters of $ G $ and also using Burnside's Theorem, $ |\Irr(G{\mid} K)|= d\leqslant n=\nz(G{\mid} K) $. Since the character table of $ G/K $ is a sub-table of the character of $ G $, we have that $ \nz(G/K)\leqslant \nz(G) $. Hence $ 1 > \anz(G)=\frac{m + n}{c + d}\geqslant \frac{m + d}{c + d}\geqslant \frac{m}{c}=a $ as required.
\end{proof}

We show that Theorem B holds for perfect groups.

\begin{lemma}\label{G=G'holds}
Let $ G $ be a finite group such that $ G=G' $. Then $ \anz(G)\geqslant 1 $. 
\end{lemma}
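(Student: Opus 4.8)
The plan is to reduce to the case where $G$ is quasisimple-like and then exploit the structure of a minimal normal subgroup via Lemma~\ref{extendible}. Since $G = G'$, let $N$ be a minimal normal subgroup of $G$; because $G$ is perfect, $N$ is non-abelian, so $N \cong S_1 \times \cdots \times S_t$ is a direct product of copies of a non-abelian simple group $S$. By Lemma~\ref{extendible} there is $\theta \in \Irr(N)$ with $\theta(1) \geqslant 5$ that extends to some $\hat\theta \in \Irr(G)$. By Lemma~\ref{injectivemap}, the characters $\beta\hat\theta$ for $\beta \in \Irr(G/N)$ are irreducible, pairwise distinct, and constitute exactly the constituents of $\theta^{G}$; there are $|\Irr(G/N)| = k(G/N)$ of them. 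The idea is to count zeros contributed by this block of characters and balance it against the total character count $k(G) = k(G/N) + |\Irr(G \mid N)|$.

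First I would bound $|\Irr(G \mid N)|$ from below by a multiple of $k(G/N)$ or handle it directly: every character in $\Irr(G \mid N)$ is non-linear (as $N \leqslant G'$), so by Burnside each vanishes on at least one class, and moreover the block $\{\beta\hat\theta\}$ lies inside $\Irr(G \mid N)$. The crucial step is to show each $\beta\hat\theta$ vanishes on many classes. Since $\hat\theta$ restricts irreducibly to $N$ with $\hat\theta(1) = \theta(1) \geqslant 5$, and $\theta$ is (a product of Steinberg characters, or the relevant character of $\AAA_n$), one knows $\theta$ — hence $\hat\theta$ — vanishes on a controlled-from-below number of classes of $N$, and these lift to at least as many classes of $G$. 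Concretely, for $S = \AAA_5$ the Steinberg-like character of degree $5$ vanishes on two classes; for larger simple groups the Steinberg character vanishes on all classes of elements whose order is divisible by the defining characteristic, which is a large set. One then wants an inequality of the shape $\nz(G) \geqslant \nz(G/N) + (\text{zeros from the }\beta\hat\theta\text{ block}) + (\text{zeros from the rest of }\Irr(G\mid N))$, and combine it with the bound on $k(G)$ to force $\anz(G) \geqslant 1$.

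A cleaner route, which I would try first, is induction together with Lemma~\ref{noofzerosfactorgroup}: suppose $\anz(G) < 1$ for a perfect $G$ and derive a contradiction. If $N < G$ is a proper normal subgroup with $G/N$ still perfect and nontrivial, Lemma~\ref{noofzerosfactorgroup} gives $\anz(G/N) \leqslant \anz(G) < 1$, so by induction we may assume $G$ has no proper nontrivial perfect quotient; combined with $G = G'$ this pins down $G$ to have a unique minimal normal subgroup $N$ with $G/N$ abelian, hence $G/N = 1$ and $G = N$ is a direct power of a simple group, or $G$ is quasisimple. In the semisimple/simple case one does the explicit count above; Theorem~A already tells us the extended character $\hat\theta$ vanishes on at least two classes, and one needs a little more to push $\anz$ up to $1$, using that the trivial character and the linear characters of an abelian quotient contribute $k(G/N)$ non-vanishing characters while $\hat\theta$ and its twists each vanish often.

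The main obstacle will be making the zero-count for the simple (or semisimple) case uniform and tight enough: Theorem~A guarantees only two zeros from one character, which by itself is far too weak to beat $\anz(G) \geqslant 1$, so one genuinely needs the stronger fact that the Steinberg character (and the analogous characters of alternating and sporadic groups) vanishes on a proportion of classes that, together with Burnside's theorem applied to all the other non-linear characters, overwhelms $k(G)$. I expect this to require invoking the classification of finite simple groups and a case analysis over the families in Lemma~\ref{extendible}, with the alternating groups $\AAA_5, \AAA_6$ and a handful of small Lie-type groups needing individual verification, plus the bookkeeping to pass from zeros of $N$ to zeros of $G$ when $N$ is a proper subgroup (induction on wreath-type extensions).
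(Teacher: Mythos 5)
There is a genuine gap here, and it stems from a miscount. Since $G=G'$, the group has exactly \emph{one} linear character, so $k(G)-1$ of the irreducible characters are non-linear and each of these vanishes on at least one class by Burnside's theorem; this already gives $\nz(G)\geqslant k(G)-1$. Hence a \emph{single} non-linear irreducible character vanishing on at least two classes is enough to force $\nz(G)\geqslant k(G)$, i.e.\ $\anz(G)\geqslant 1$. You assert the opposite --- that ``two zeros from one character\ldots is far too weak'' --- and on that basis you launch a CFSG-driven programme (Steinberg characters vanishing on a positive proportion of classes, case analysis over the families in Lemma~\ref{extendible}, bookkeeping for semisimple minimal normal subgroups) which you then only sketch and never complete. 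The paper's proof is three lines: if no non-linear character vanished on two classes, then every non-linear irreducible character would vanish on exactly one class, and by Chillag's result \cite[Proposition 2.7]{Chi99} $G$ would be a Frobenius group with a complement of order $2$ and abelian odd-order kernel, hence solvable, contradicting $G=G'$.

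Two further concrete errors in your reduction. First, ``because $G$ is perfect, $N$ is non-abelian'' is false for a minimal normal subgroup $N$: the perfect group $\SL_2(5)$ has the abelian minimal normal subgroup $Z(\SL_2(5))$ of order $2$, so you cannot assume $N$ is a direct product of non-abelian simple groups, and Lemma~\ref{extendible} does not apply to such an $N$. Second, your ``cleaner route'' correctly reduces (via Lemma~\ref{noofzerosfactorgroup} and induction, noting every nontrivial quotient of a perfect group is perfect) to $G$ simple, but at that point you still need the elementary count above rather than the heavy machinery you propose; without it the argument is not finished.
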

\begin{proof}
Since $ G $ has one linear character, it is sufficient to show that there exists a non-linear irreducible character of $ G $ that vanishes on at least two conjugacy classes. Suppose the contrary, that is, every non-linear irreducible character of $ G $ vanishes on exactly one conjugacy class. Then by \cite[Proposition 2.7]{Chi99}, $ G $ is a Frobenius group with
a complement of order $2$ and an abelian odd-order kernel, that is, $ G $ is solvable, contradicting the hypothesis that $ G $ is perfect. Hence the result follows.
\end{proof}

Let $ G $ be a finite group and $ \chi \in \Irr(G) $. Recall that $\upsilon(\chi ):= \{x\in G\mid \chi (x)=0\} $.

\begin{lemma}\label{ZSS10Lemma2.1} Let $ G $ be a non-abelian finite group and $ \chi \in \Irr(G) $ be non-linear. Suppose that $ N $ is a normal subgroup of $ G $ such that $ G'\leqslant N < G $. If $ \chi_{N} $ is not irreducible, then the following two statements hold:
\begin{itemize}
\item[(a)] There exists a normal subgroup $ K $ of $ G $ such that $ N\leqslant K < G $ and $ G{\setminus} K \subseteq \upsilon(\chi) $.
\item[(b)] If $ (G{\setminus} G')\cap \upsilon(\chi) $ consists of $ n $ conjugacy classes of $ G $, then 
\begin{center}
$ |G{:}G'|-|K{:}G'|\leqslant n $.
\end{center}
\end{itemize}
\end{lemma}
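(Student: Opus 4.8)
The plan is to produce $K$ as a kernel of characters coming from the action of the linear character group $\Irr(G/N)$ on $\chi$ by multiplication; note that every $\lambda\in\Irr(G/N)$ is linear since $G'\le N$, and that $(\lambda\chi)_N=\chi_N$ because $\lambda_N=1_N$. First I would invoke Clifford's theorem to write $\chi_N=e(\theta_1+\cdots+\theta_t)$, where $\theta=\theta_1\in\Irr(N)$, the $\theta_i$ are the distinct $G$-conjugates of $\theta$, $e=\langle\chi_N,\theta\rangle$, and $t$ is the number of conjugates; the hypothesis that $\chi_N$ is reducible is precisely the statement $e^2t\ge 2$. For every $\lambda\in\Irr(G/N)$, Frobenius reciprocity gives $\langle\theta^{G},\lambda\chi\rangle=\langle\theta,(\lambda\chi)_N\rangle=\langle\theta,\chi_N\rangle=e$. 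If the characters $\lambda\chi$ for $\lambda\in\Irr(G/N)$ were pairwise distinct, then $\theta^{G}$ would contain $e\sum_{\lambda\in\Irr(G/N)}\lambda\chi$, and evaluating at $1$ would force $|G{:}N|\,\theta(1)=\theta^{G}(1)\ge e\,|G{:}N|\,\chi(1)=e^{2}t\,|G{:}N|\,\theta(1)$, i.e.\ $e^{2}t\le 1$, a contradiction. Hence there are $\lambda\ne\mu$ in $\Irr(G/N)$ with $\lambda\chi=\mu\chi$, so the subgroup $S:=\{\lambda\in\Irr(G/N):\lambda\chi=\chi\}$ of $\Irr(G/N)$ is nontrivial.

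Next I would set $K=\bigcap_{\lambda\in S}\ker\lambda$, a normal subgroup of $G$ with $N\le K$, and with $K\le\ker\lambda<G$ for any $1\ne\lambda\in S$, so that $N\le K<G$. For $g\in G\setminus K$ choose $\lambda\in S$ with $\lambda(g)\ne 1$; from $\lambda\chi=\chi$ we get $\chi(g)=\lambda(g)\chi(g)$, hence $\chi(g)=0$. Thus $G\setminus K\subseteq\upsilon(\chi)$, giving (a).

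For (b), since $G'\le N\le K$ we have $G\setminus K\subseteq G\setminus G'$, so together with (a), $G\setminus K\subseteq(G\setminus G')\cap\upsilon(\chi)$. Then I would count conjugacy classes: $G\setminus K$ is the union of the $|G{:}K|-1$ non-identity cosets of $K$, and each such coset maps onto a full coset of $K/G'$ in the abelian group $G/G'$, i.e.\ onto $|K{:}G'|$ singleton conjugacy classes of $G/G'$; their preimages are $|K{:}G'|$ pairwise disjoint nonempty unions of conjugacy classes of $G$. Therefore $G\setminus K$ contains at least $(|G{:}K|-1)|K{:}G'|=|G{:}G'|-|K{:}G'|$ conjugacy classes of $G$, all lying in $(G\setminus G')\cap\upsilon(\chi)$, which has $n$ of them; hence $|G{:}G'|-|K{:}G'|\le n$.

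The substantive step is the nonvanishing of $S$ in the first paragraph (the degree comparison via $\theta^{G}$); the extraction of $K$ and the class count in (b) are routine. The point to verify carefully is that the inequality is applied in the right direction and that ``$\chi_N$ reducible'' is genuinely equivalent to $e^{2}t\ge 2$, so that the derived relation $e^{2}t\le 1$ is a contradiction.
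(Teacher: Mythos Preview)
Your argument is correct, but part (a) proceeds along a genuinely different line from the paper's. The paper simply invokes Isaacs' Theorem~6.22: since $G/N$ is abelian, $\chi$ is a relative $M$-character with respect to $N$, so $\chi=\psi^{G}$ for some $\psi\in\Irr(K)$ with $N\le K\le G$ and $\psi_{N}\in\Irr(N)$; then $G\setminus K\subseteq\upsilon(\chi)$ because induced characters vanish off the inducing subgroup, and $K<G$ follows from the reducibility of $\chi_{N}$. Your route instead exploits the multiplication action of $\Irr(G/N)$ on $\Irr(G\mid\theta)$: the degree estimate on $\theta^{G}$ forces the stabiliser $S$ to be nontrivial, and you take $K=\bigcap_{\lambda\in S}\ker\lambda$. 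This is more self-contained (no appeal to the relative $M$-group theorem) and actually yields $|G{:}K|=|S|\ge e^{2}t$, whereas the paper's $K$ has $|G{:}K|=et$; so your $K$ can be smaller, which would only strengthen the inequality in (b). The cost is that your $K$ need not carry an irreducible character inducing $\chi$, a feature of the paper's $K$ that is not used here anyway. Part (b) is essentially the same coset-of-$G'$ count as in the paper, phrased slightly differently.
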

\begin{proof}
For (a), note that since $ G/N $ is abelian, it follows that $ \chi $ is a relative $ M $-character with respect to $ N $ by \cite[Theorem 6.22]{Isa06}. This means that there exists $ K $ with $ N\leqslant K \leqslant G $ and $ \psi \in \Irr(K) $ such that $ \chi =\psi ^{G} $ and $ \psi _{N}\in \Irr(N) $. Hence $ G{\setminus} K \subseteq \upsilon(\chi) $, Since $ \chi _{N} $ is not irreducible we have that $ K < G $ and the result follows.

For (b), let $ g_{1}, g_{2}, \dots, g_{m} $ be a complete set of representatives of the cosets of $ G' $ in $ G $, with $ g_{1}, g_{2}, \dots ,g_{k} $ a complete set of representatives of the cosets of $ G' $ in $ K $ so that $ m=|G:G'| $ and $ k=|K:G'| $. Since $ \chi $ vanishes on $ G{\setminus} K $, we have that $ \chi $ vanishes on $ g_{k+1}, g_{k+2}, \dots ,g_{m} $. But these elements are not conjugate and so $ m-k\leq n $ as required.
\end{proof}

In the following results we will consider groups with an irreducible character that vanishes on one conjugacy class. We denote by $ k_{G}(N) $, the number of conjugacy classes of $ G $ in $ N $ where $ N $ is a subset of $ G $. We first prove an easy lemma:

\begin{lemma}\label{Lemma2(2)}
Let $ N $ be a normal subgroup of $ G $. If $ k_{G}(G{\setminus} N)=1 $, then $ |G{:}N|=2 $ and $ G $ is a Frobenius group with an abelian kernel $ N $ of odd order.
\end{lemma}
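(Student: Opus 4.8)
The plan is to derive the whole statement from a single orbit-counting identity applied to one element outside $N$. Since a conjugacy class is nonempty, $G{\setminus}N$ is nonempty, so $N<G$; fix $x\in G{\setminus}N$. The hypothesis says $G{\setminus}N=x^{G}$, hence $|G|-|N|=|x^{G}|=|G|/|\Centralizer_{G}(x)|$. Writing $m=|G{:}N|$ and $c=|\Centralizer_{G}(x)|$ and clearing denominators, this becomes $c(m-1)=m$, so $m-1$ divides $m$ and therefore $m-1=1$. Thus $|G{:}N|=2$ and $|\Centralizer_{G}(x)|=2$; since $\la x\ra\leqslant\Centralizer_{G}(x)$ and $x\neq 1$, the element $x$ is an involution and $\Centralizer_{G}(x)=\la x\ra$.

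Next I would examine conjugation by $x$ as an automorphism $\sigma$ of the normal subgroup $N$. Because $\Centralizer_{G}(x)=\la x\ra$ and $x\notin N$, we get $\Centralizer_{N}(x)=\Centralizer_{G}(x)\cap N=1$, so $\sigma$ is a fixed-point-free automorphism of order dividing $2$. The standard argument then applies: the map $y\mapsto y^{-1}\sigma(y)$ is injective on the finite group $N$, hence surjective, and applying $\sigma$ to $y^{-1}\sigma(y)$ returns its inverse, so $\sigma$ inverts every element of $N$. Consequently $N$ is abelian, and $N$ has no involution (an involution would be fixed by $\sigma$), so $|N|$ is odd.

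Finally, since $|G{:}N|=2=|\la x\ra|$ and $x\notin N$, we have $G=N\rtimes\la x\ra$ with $N\cap\la x\ra=1$; the relation $\Centralizer_{N}(x)=1$ is exactly the condition that makes this a Frobenius decomposition with kernel $N$ and complement $\la x\ra$. Together with the previous paragraph this yields that $G$ is a Frobenius group with abelian kernel $N$ of odd order, as claimed.

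I do not anticipate a genuine obstacle: the crux is the divisibility step $m-1\mid m$ coming from the class equation, which immediately pins the index at $2$ and the centralizer of $x$ at order $2$, after which the classical theory of involutory fixed-point-free automorphisms does the rest. The only place needing a little care is spelling out the ``inverts every element'' step and deducing that $N$ is abelian of odd order, but this is entirely routine.
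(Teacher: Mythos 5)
Your proof is correct and follows essentially the same route as the paper: both exploit the counting identity $|x^{G}|=|G|-|N|$ to force $|G{:}N|=2$ and $\Centralizer_{G}(x)=\la x\ra$ of order $2$, and then read off the Frobenius structure. You merely make explicit the classical fixed-point-free-involution argument (inversion on $N$, hence $N$ abelian of odd order) that the paper leaves as a standard fact.
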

\begin{proof}
Since $ k_{G}(G{\setminus} N)=1 $, all the elements in $ G{\setminus} N $ are conjugate. It follows that if $ x\in G{\setminus} N $, then the size of the conjugacy class containing $ x $ , say $ c $, is $ |G|-|N| $. Since $ N $ is a proper subgroup of $ G $, $ c\geq |G|-|G|/2=|G|/2 $.  But the size of any conjugacy class of a non-trivial group is a proper divisor of the group, so we deduce that $ |N|=|G|/2 $. Then $ C_{G}(x)=\langle x\rangle $ is of order $ 2 $, so $ G=\langle x \rangle N $ is a Frobenius group with a complement $ \langle x \rangle $ of order $ 2 $ and hence the Frobenius kernel $ N $ is abelian of odd order.
\end{proof}

We also need this result:

\begin{proposition}\label{classificationoneclass}
Let $ G $ be an almost simple group. If $ \chi \in \Irr (G) $ vanishes on exactly one conjugacy class, then one of the following holds:
\begin{itemize}
\item[(a)] $ G= \PSL_{2}(5) $, $ \chi (1)=3 $ or $ \chi (1)=4 $;
\item[(b)] $ G\in \{\AAA_{6}{:}2_{2},~ \AAA_{6}{:}2_{3}\} $, $ \chi(1)=9 $ for all such $ \chi\in \Irr (G) $;
\item[(c)] $ G=\PSL_{2}(7) $, $ \chi (1)= 3 $;
\item[(d)] $ G=\PSL_{2}(8){:}3 $, $ \chi (1)=7 $;
\item[(e)] $ G=\PGL_{2}(q) $, $ \chi (1)=q $, where $ q\geq 5 $;
\item[(f)] $ G=$  $^{2}\rm{B_{2}}(8){:}3 $, $ \chi(1)=14 $.
\end{itemize}
\end{proposition}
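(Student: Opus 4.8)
The plan is to reduce the problem to a finite check using the classification of finite simple groups, since an almost simple group $G$ has a unique minimal normal subgroup $S$ which is non-abelian simple, and $S \trianglelefteq G \leqslant \Aut(S)$. I would first dispose of the easy structural constraints. If $\chi \in \Irr(G)$ vanishes on exactly one conjugacy class, then $\chi$ is non-linear (Burnside), and in particular $\chi(1) \geqslant 2$. The key preliminary observation is that a character vanishing on precisely one class is very restrictive: by \cite[Proposition 2.7]{Chi99} (or the analysis behind Lemma \ref{G=G'holds}), if \emph{every} non-linear irreducible character of a group vanishes on exactly one class then the group is a solvable Frobenius group, which an almost simple group is not; so here only the single character $\chi$ is assumed to have this property, and we must instead argue directly about $G$.

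The main engine will be the following counting idea, which is the analogue of the vanishing-element results of Isaacs--Navarro--Wolf and of Chillag. Consider $\upsilon(\chi)$, the set of vanishing elements of $\chi$; by hypothesis it is a single conjugacy class, say $x^G$, of size $|x^G|$. The column orthogonality relation at $x$ against the trivial character gives $\sum_{\psi \in \Irr(G)} \psi(1)\psi(x) = 0$, and the second orthogonality relation at $x$ gives $\sum_{\psi} |\psi(x)|^2 = |\Centralizer_G(x)|$. Since $\chi$ is the only irreducible character vanishing at $x$, \emph{every} other irreducible character (in particular every non-linear one) is non-zero at $x$, which forces $|\Centralizer_G(x)|$ to be large relative to $k(G)$, and simultaneously forces strong restrictions via the vanishing-off subgroup $V(\chi) = \langle g \in G : \chi(g) \neq 0\rangle$: here $G \setminus V(\chi) \subseteq x^G$, so $|G : V(\chi)| \leqslant 1 + |x^G|/|V(\chi)|$, and a theorem of Chillag forces $V(\chi)$ to be a proper normal subgroup only in tightly constrained cases. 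Because $G$ is almost simple, $V(\chi) = G$, so in fact $\chi$ vanishes \emph{only} on a single class $x^G$ and is nonzero everywhere else. Combined with $\sum_\psi \psi(1)\psi(x)=0$ and the fact that $\chi(x)=0$, we get $\sum_{\psi \neq \chi} \psi(1)\psi(x) = 0$ with all terms nonzero, which together with Burnside-type integrality constraints on $|\chi(x)|$-free character values will eliminate all but small-rank groups.

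Concretely I would proceed as follows. (1) Handle $G = S$ simple: for alternating groups $\AAA_n$, use the known vanishing pattern of the standard and other low-degree characters — only $\AAA_5, \AAA_6, \AAA_7$ can survive, and a direct character-table inspection (using the \Atlas) picks out $\AAA_5 \cong \PSL_2(5)$ and $\AAA_6$, giving cases (a), (b) partly; for sporadic groups, a finite \Atlas{} check shows none has an irreducible character vanishing on exactly one class; for groups of Lie type, use Lemma \ref{extendible}(c) and the fact that the Steinberg character $\mathrm{St}$ of a group of Lie type over $\F_q$ vanishes on all non-semisimple elements, hence on more than one class once $q$ or the rank is large, while a careful look at $\PSL_2(q)$, $\PSL_3(q)$, ${}^2\mathrm{B}_2(q)$ for small parameters via generic character tables (Chevie) isolates $\PSL_2(5)$, $\PSL_2(7)$, $\PSL_2(8)$, ${}^2\mathrm{B}_2(8)$. (2) Then extend to $S < G \leqslant \Aut(S)$ for each surviving $S$: for $S = \PSL_2(q)$ the relevant overgroups are $\PGL_2(q)$ and field-extension groups, giving cases (d), (e), (f) and the $\AAA_6{:}2_i$ groups in (b); check each by its (known) character table. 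The bookkeeping — matching degrees, verifying "exactly one" rather than "at least one" vanishing class, and making sure no case is overlooked among $\Out(S)$ for $S$ of Lie type — is the tedious part but is a finite computation.

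The hard part will be the groups of Lie type of unbounded rank or field size: one cannot check these individually, so the argument must be \emph{uniform}. The cleanest route is to show that if $S$ is of Lie type over $\F_q$ of rank $r$ with $(r,q)$ outside a small explicit list, then $S$ — and hence any almost simple $G$ with socle $S$ — has at least two conjugacy classes of elements on which some fixed irreducible character vanishes, or more robustly, that $S$ has at least two classes of elements each lying outside the support of \emph{every} given irreducible character is impossible, so instead one bounds below the number of vanishing classes of the Steinberg character (it vanishes on every element whose order is divisible by $p$, and there are at least two such classes as soon as $|S|_p > p$ or there are two non-conjugate unipotent elements). Establishing this lower bound on the number of vanishing classes, uniformly and with a sharp enough small-case list, is where essentially all the work lies; everything else is orthogonality relations plus \Atlas{}/Chevie lookups.
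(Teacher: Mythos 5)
Your proposal is an outline rather than a proof, and the gap is exactly where you say it is: the uniform treatment of alternating groups and groups of Lie type of unbounded rank or field size is never carried out, and it is not a routine computation. The paper does not attempt it either; its entire proof is a citation. It invokes \cite[Theorem 5.2]{Mad19q} together with the proof of \cite[Theorem 1.2]{Mad19zp} to conclude that an irreducible character of an almost simple group vanishing on exactly one class must be \emph{primitive}, and then reads off the list from the classification of primitive characters vanishing on a unique class in \cite[Theorem 1.2]{Mad19zp} (which in turn builds on Dixon--Rahnamai Barghi \cite{DB07} and Burness--Tong-Viet \cite{BT-V15}). So the statement is genuinely a deep imported classification, and a blind from-scratch argument would have to reproduce that body of work; sketching ``Chevie plus orthogonality plus a Steinberg bound'' does not close it.

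Two more concrete problems with the sketch itself. First, you assert that ``since $\chi$ is the only irreducible character vanishing at $x$, every other irreducible character is non-zero at $x$.'' The hypothesis is that $\chi$ vanishes on exactly one class, not that $\chi$ is the unique character vanishing on that class; other irreducible characters may well vanish at $x$, so the orthogonality-based leverage you hope to extract from this is unavailable. Second, your uniform argument for Lie type controls only the Steinberg character (via its vanishing on $p$-singular classes), but the proposition classifies \emph{all} pairs $(G,\chi)$; to eliminate a group you must show that \emph{every} irreducible character vanishes on zero or at least two classes, which is a statement about the whole character table, not about one distinguished character. The same issue affects your treatment of $\AAA_n$. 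Until those points are addressed the proposal does not establish the proposition.
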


\begin{proof}
It was shown in \cite[Theorem 5.2]{Mad19q} and the proof of \cite[Theorem 1.2]{Mad19zp} that all irreducible characters of $ G $ that vanishes on exactly one conjugacy class are primitive, the result follows. Hence the list in \cite[Theorem 1.2]{Mad19zp} is a complete one. The result then follows.
\end{proof}

\begin{lemma}\label{Sei68} A finite group $ G $ has exactly one non-linear irreducible character if and only
if $ G $ is isomorphic to one of the following:
\begin{itemize}
\item[(a)] $ G $ is an extra-special $2$-group.
\item[(b)] $G$ is a Frobenius group with an elementary abelian kernel of order $ p^{n} $ and a cyclic complement of order $ p^{n}-1 $, where $ p $ is prime and $ n $ a positive integer.
\end{itemize}
Moreover, $ \anz(G)=\frac{m - 1}{ m + 1} $ for some integer $ m\geqslant 2 $.
\end{lemma}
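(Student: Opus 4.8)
The plan is to separate the structural equivalence from the numerical formula. For the equivalence I would invoke Seitz's classical classification of finite groups possessing a single non-linear irreducible character: such a group is either an extra-special $2$-group or a Frobenius group of the shape described in (b). If one wishes to reprove this, the non-trivial (forward) direction runs as follows. Let $\chi$ be the unique non-linear irreducible character; from $\sum_{\psi\in\Irr(G)}\psi(1)^{2}=|G|$ one gets $\chi(1)^{2}=|G{:}G'|\,(|G'|-1)$. If $G$ is nilpotent, a short computation with irreducible characters of a direct product forces $G$ to be a $p$-group, so $\chi(1)$ is a $p$-power; since $|G'|$ is then a $p$-power and $|G'|-1$ is prime to $p$, the identity forces $|G'|=2$, hence $p=2$, and one checks $\Center(G)=G'$ has order $2$, $\chi$ is faithful, and $G$ is extra-special. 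If $G$ is not nilpotent, one shows it has a normal abelian Sylow subgroup $V$ with a complement $C$ acting Frobenius-ly and transitively on $V\setminus\{1\}$, which is case (b). I would, however, simply cite Seitz here and spend the work on the ``Moreover'' clause, which simultaneously re-proves the (easy) converse.

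For the numerical part, the key observation is that in either family $G$ has exactly $|G{:}G'|$ linear characters, none of which vanishes anywhere, together with exactly one non-linear character $\chi$. Hence $k(G)=|\Irr(G)|=|G{:}G'|+1$ and $\nz(G)=n\upsilon(\chi)$, so everything reduces to locating the zeros of $\chi$. I claim $\chi$ is non-zero on exactly two conjugacy classes in both cases; granting this, $\nz(G)=k(G)-2=|G{:}G'|-1$, whence
\[
\anz(G)=\frac{|G{:}G'|-1}{|G{:}G'|+1}.
\]

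For case (a), $G$ is extra-special of order $2^{2n+1}$, so $\chi(1)=2^{n}$, $\Center(G)=G'$ has order $2$, and $|G{:}G'|=2^{2n}$. From $\sum_{g\in G}|\chi(g)|^{2}=|G|$ together with $|\chi(z)|=\chi(1)$ for $z\in\Center(G)$ one reads off that $\chi$ vanishes off $\Center(G)$; and since $[g,G]=G'$ for each $g\notin\Center(G)$, every non-central class has size $2$, so there are exactly $2^{2n}-1=|G{:}G'|-1$ of them, all of which are zeros of $\chi$, while $\chi$ is non-zero on precisely the two central classes. For case (b), write $G=V\rtimes C$ with $|V|=p^{n}$, $|C|=p^{n}-1$, $G'=V$, and $\chi=\lambda^{G}$ for a non-trivial $\lambda\in\Irr(V)$, so that $\chi(1)=p^{n}-1=|G{:}G'|$. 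Because $C$ acts without non-trivial fixed points it has a single orbit on $V\setminus\{1\}$ and a single orbit on $\Irr(V)\setminus\{1_{V}\}$; a direct evaluation of the induced character then gives $\chi(v)=\sum_{1_{V}\neq\mu\in\Irr(V)}\mu(v)=-1\neq 0$ for $1\neq v\in V$, whereas for $g\in G\setminus V$ no conjugate of $g$ can lie in the $p$-group $V$ (it is a non-trivial $p'$-element), so $\chi(g)=0$. Since $V$ splits into the two classes $\{1\}$ and $V\setminus\{1\}$, the remaining $k(G)-2=p^{n}-2=|G{:}G'|-1$ classes lie in $G\setminus V$ and are exactly the zeros of $\chi$.

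Putting $m=|G{:}G'|$ now yields $\anz(G)=\frac{m-1}{m+1}$, and $m\geq 2$ since $|G{:}G'|=2^{2n}\geq 4$ in case (a) and $|G{:}G'|=p^{n}-1\geq 2$ in case (b); as a sanity check, $\mathrm{S}_{3}$ and $\mathrm{A}_{4}$ fall under case (b) and recover the values $\anz=\tfrac13$ and $\anz=\tfrac12$ quoted in the introduction. The only genuinely hard input is Seitz's classification, which I take for granted; the remainder is routine character arithmetic, the one step requiring a little care being the evaluation of the induced character $\chi$ in case (b) and, dually, the verification that $\chi$ kills every non-central class in case (a).
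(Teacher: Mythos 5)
Your proposal is correct and follows essentially the same route as the paper: cite Seitz's classification for the structural dichotomy, then in each case count the linear characters (none of which vanish) and locate the zeros of the unique non-linear character $\chi$ — showing it vanishes off $\Center(G)$ in the extra-special case and off the kernel $G'$ in the Frobenius case — to get $\anz(G)=\frac{m-1}{m+1}$ with $m=|G{:}G'|$. The only cosmetic difference is that the paper phrases the extra-special computation via $\chi$ being fully ramified over $\Center(G)$ rather than via the second orthogonality relation, and argues the non-vanishing of $\chi$ on $G'$ in case (b) by contradiction instead of by the explicit evaluation $\chi(v)=-1$.
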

\begin{proof}
The statements (a) and (b) follows from \cite[Theorem]{Sei68}. Let $ G $ be an extra-special $ 2 $-group of order $ 2^{2k+1} $ with $ |G/G'|=|G/Z(G)|=2^{2k} $. Then $ G $ has $ 2^{2k} $ linear characters and so $ 2^{2k} + 1 $ irreducible characters. Note that the non-linear irreducible character $ \chi $ of $ G $ is fully ramified with respect to $ Z(G) $. In particular, $ \chi $ vanishes on $ G{\setminus} Z(G) $. Hence $ \chi $ vanishes on at least $ 2^{2k} - 1 $ conjugacy classes. Since $ G $ has the identity and a non-trivial central element which are non-vanishing elements, $ \chi $ vanishes on exactly $ 2^{2k} - 1 $ conjugacy classes. Then $ \anz(G) = \frac{2^{2k} - 1}{2^{2k} + 1} $ and the result follows.

Suppose $ G $ is a Frobenius group with an elementary abelian kernel $ G' $ of order $ p^{n} $ and a cyclic complement of order $ p^{n} - 1 $. Note that $ |G/G'|=p^{n} - 1 $ and so $ G $ has $ p^{n} - 1 $ linear characters. Hence $ G $ has $ p^{n} $ irreducible characters. Since the non-linear irreducible character $ \chi $ of $ G $ is induced from irreducible character of $ G' $, $ \chi $ vanishes on $ G{\setminus}G' $.  It follows that $ \chi $ vanishes on at least $ p^{n} - 2 $ conjugacy classes. Since $ G' $ has one non-trivial conjugacy class of $ G $, $ \chi $ does not vanish on $ G' $. Otherwise, $ \chi $ vanishes on $ G{\setminus} \{1\}$, a contradiction. Since $ G $ has $ p^{n} $ conjugacy classes, we have that $ \chi $ vanishes on exactly $ p^{n} - 2 $ conjugacy classes and so $ \anz(G)=\frac{p^{n} - 2}{p^{n}} $ as required.
\end{proof}

We shall need the following two results to prove Theorem F. 

\begin{lemma}\cite{Pal81}\label{Pal81} A finite group $ G $ has exactly two non-linear irreducible characters if and only
if $ G $ is isomorphic to one of the following:
\begin{itemize}
\item[(a)] $ G $ is an extra-special $3$-group.
\item[(b)] $ G $ is a $ 2 $-group of order $2^{2k+2} $, $|G'| = 2 $, $ |Z(G)| = 4 $, and $ G $ has two non-linear
irreducible characters with equal degree $ 2^{k} $.
\item[(c)] $ G $ is a Frobenius group with an elementary abelian kernel of order $ 9 $ and Frobenius complement $ Q_{8} $.
\item[(d)] $ G $ is a Frobenius group with an elementary abelian kernel of order $ p^{k} $ and a cyclic Frobenius complement of order $ (p^{k} - 1)/2 $, where $ p $ is odd prime.
\item[(e)] $ G/Z(G) $ is a Frobenius group with an elementary abelian kernel of order $ p^{k} $ and a cyclic Frobenius complement of order $ p^{k} - 1 $, where $ p $ is prime and $ |Z(G)|=2 $.
\end{itemize}
\end{lemma}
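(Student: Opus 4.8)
The plan is to follow the standard template for such ``few irreducible characters'' classifications. Write $\chi_1,\chi_2$ for the two nonlinear irreducible characters, put $d_i=\chi_i(1)\ge 2$ and $f=|G:G'|$, so that $k(G)=f+2$ and
\[
|G|=f+d_1^2+d_2^2,\qquad\text{equivalently}\qquad f\,(|G'|-1)=d_1^2+d_2^2.
\]
First I would reduce to $G$ solvable. If $G$ is not solvable, let $R$ be its solvable radical; then the socle $\mathrm{soc}(G/R)=S_1\times\cdots\times S_t$ is a nonabelian minimal normal subgroup of $G/R$, Lemma~\ref{extendible} supplies $\theta\in\Irr(\mathrm{soc}(G/R))$ extending to $G/R$ with $\theta(1)\ge 5$, and Lemma~\ref{injectivemap} then yields $|G/R:\mathrm{soc}(G/R)|$ distinct nonlinear characters $\beta\hat\theta$ of $G/R$, which inflate to nonlinear characters of $G$. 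If that index is at least $3$ we have a contradiction; the two remaining indices are eliminated using the classical (classification-free) fact that a nonabelian simple group has at least four nonlinear irreducible characters, by producing a third nonlinear character of $G/R$ from a non-invariant nonlinear character of the socle. Hence $G$ is solvable.

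I would next dispose of the case where $G$ is a $p$-group. Here $d_1,d_2,f,|G'|$ are powers of $p$ while $|G'|-1$ is coprime to $p$; comparing $p$-parts in $f(|G'|-1)=d_1^2+d_2^2$ forces first $d_1=d_2=:p^k$ and then $f=p^{2k}$ with $|G'|=3$ when $p$ is odd, or $f=2^{2k+1}$ with $|G'|=2$ when $p=2$ --- in particular $p\le 3$. When $p$ is odd, $G'$ is central of order $3$ and $|G:Z(G)|\ge d_1^2$ forces $Z(G)=G'\cong C_3$; as the commutator pairing $G/Z(G)\times G/Z(G)\to C_3$ is then alternating and nondegenerate, one gets $G^p\le Z(G)$, so $G/Z(G)$ is elementary abelian and $G$ is extra-special --- case (a). When $p=2$ one has $|G|=2^{2k+2}$, $|G'|=2$; the same argument rules out $|Z(G)|=|G'|=2$ (an odd-dimensional symplectic $\mathbb{F}_2$-space), leaving $|Z(G)|=4$ --- case (b). No $p$-group occurs for $p\ge 5$.

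Now suppose $G$ is solvable but not nilpotent. The key structural input is that $\lambda\chi_i$ is nonlinear for every linear $\lambda$, so $\{\chi_1,\chi_2\}$ is invariant under tensoring by $\Irr(G/G')$; hence the stabiliser $\{\lambda:\lambda\chi_i=\chi_i\}\cong\widehat{G/V(\chi_i)}$ (with $V(\chi_i)$ the vanishing-off subgroup of $\chi_i$) has index at most $2$ in $\Irr(G/G')$, i.e.\ $G'\le V(\chi_i)$ and $[G:V(\chi_i)]\le 2$. Choosing a minimal normal subgroup $N$ (elementary abelian, a $p$-group, and moved nontrivially by $G$ since $G$ is not nilpotent) and running Clifford theory over $N$, each $\chi_i$ either contains $N$ in its kernel or is induced from a character lying above a nontrivial character of $N$; combining the $G$-orbit structure on $\Irr(N)$ with the bound on $[G:V(\chi_i)]$, one shows $N$ has a complement $H$ acting fixed-point-freely, so that $G=N\rtimes H$ is a Frobenius group with kernel $N$. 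Then the number of nonlinear characters of $G$ equals the number of $H$-orbits on $\Irr(N)\setminus\{1\}$ (each yielding an irreducible of degree $|H|$) plus the number of nonlinear characters of $H\cong G/N$, and the only ways this equals $2$ are: $|H|=|N|-1$ with $H$ having exactly one nonlinear character, which (as $H$ must be a fixed-point-free group) forces $H\cong Q_8$ and $|N|=9$, case (c); or $|H|=(|N|-1)/2$ with $H$ abelian, hence cyclic, case (d). The one remaining configuration --- where $G$ itself is not Frobenius but a central subgroup lies ``in the way'' --- is pinned down to $|Z(G)|=2$ with $G/Z(G)$ having a single nonlinear character, and by Lemma~\ref{Sei68}, together with the fact that $G/Z(G)$ cannot be an extra-special $2$-group (else $G$ would be nilpotent), this is exactly case (e). This paragraph carries essentially all of the difficulty: the delicate points are forcing the Frobenius decomposition $G=N\rtimes H$ with $H$ fixed-point-free from the weak hypothesis, and isolating case (e) by controlling the position of $Z(G)$ relative to $G'$.

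Finally I would verify the converse: for each of (a)--(e), the standard character theory of extra-special groups, of the $2$-groups in (b), and of Frobenius groups with abelian kernel shows directly that $G$ has exactly two nonlinear irreducible characters, recovering the stated degrees and orders via $|G|=f+d_1^2+d_2^2$. The forward implication is the substantive one; the converse is routine.
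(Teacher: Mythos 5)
The paper offers no proof of this lemma at all: it is quoted directly from P\'alfy's 1981 paper \cite{Pal81}, so there is no internal argument to measure your proposal against. Judged on its own terms, your framework is the right one: the identity $f\,(|G'|-1)=d_1^2+d_2^2$, the reduction to solvable groups (which ultimately rests on the classical fact that a nonabelian simple group has at least five conjugacy classes), the $p$-adic analysis in the $p$-group case, and the orbit count for Frobenius groups with abelian kernel are all correct and essentially complete as written; in particular your derivation of cases (a), (b), (c) and (d) is sound.

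The genuine gap is in your third paragraph, and it is not merely a matter of unfilled detail. You assert that a minimal normal subgroup $N$ is ``moved nontrivially by $G$ since $G$ is not nilpotent''; this is false, since a non-nilpotent group can have every minimal normal subgroup central (for instance $\SL_2(3)$ has $\{\pm I\}$ as its unique minimal normal subgroup), and this is precisely the configuration that produces case (e). Consequently your claimed conclusion that ``$G=N\rtimes H$ is a Frobenius group with kernel $N$'' cannot follow from the stated inputs: the groups of case (e), such as $C_2\times \SSS_3$ or the dicyclic group of order $12$, are solvable, non-nilpotent, have exactly two nonlinear irreducible characters, and are not Frobenius groups (they have nontrivial centre). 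Your closing sentence about ``the one remaining configuration'' tacitly concedes this, but then simply restates the answer rather than deriving it; nothing in the sketch explains why a non-Frobenius example must have $|Z(G)|=2$, why $G/Z(G)$ must be of the Seitz type with complement of order $p^k-1$ rather than some other shape, or why no further sporadic configurations arise. Closing this requires an honest Clifford-theoretic analysis over a central minimal normal subgroup (fully ramified versus extendible characters, interacting with the size-at-most-two orbit of each $\chi_i$ under multiplication by $\Irr(G/G')$), which is where the real content of P\'alfy's theorem lies.
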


\begin{lemma}\cite[Theorem 2.6]{Qia07}\label{Qia07Theorem2.6} Let $ G $ be a finite group of odd order. Then $ G $ has an irreducible
character that vanishes on exactly two conjugacy classes if and only if $ G $ is one of the following groups:
\begin{itemize}
\item[(a)] $ G $ is a Frobenius group with a complement of order $ 3 $.
\item[(b)] There are normal subgroups $ M $ and $ N $ of $ G $ such that: $ M $ is a Frobenius group with the
kernel $ N $; $ G/N $ is a Frobenius group of order $ p(p - 1)/2 $ with the kernel $ M/N $ and
a cyclic complement of order $ (p - 1)/2 $ for some odd prime $ p $. In this case, $ \chi_{M} $ is irreducible.

\end{itemize}

\end{lemma}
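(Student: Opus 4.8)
Since this lemma is \cite[Theorem~2.6]{Qia07}, in the paper one simply cites it; I sketch how I would instead prove it directly. For the ``if'' direction I would verify each family by producing the required character. In case~(a), writing $N$ for the Frobenius kernel and $C$ for the complement of order $3$, the complement is self-centralising, so $G{\setminus}N$ splits into exactly two $G$-classes; I would take $\chi=\lambda^{G}$ for a nontrivial linear $\lambda\in\Irr(N)$ lying in a regular $C$-orbit (using that $C$ acts fixed-point-freely on $N/N'$) and check that $\chi$ is irreducible, vanishes on all of $G{\setminus}N$, and has no zero on $N$, so $n\upsilon(\chi)=2$. In case~(b) I would compute similarly with the series $N\normal M\normal G$ and the character $\chi$ distinguished by $\chi_{M}\in\Irr(M)$. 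These are essentially bounded computations, so I will not carry them out here.

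For the ``only if'' direction, let $\chi\in\Irr(G)$ vanish on exactly two classes and put $V=V(\chi)=\langle g\in G:\chi(g)\neq0\rangle$, a normal subgroup on whose complement $\chi$ vanishes. Thus $G{\setminus}V$ is empty, one class, or two classes; the ``one class'' possibility is killed by Lemma~\ref{Lemma2(2)}, which would give $|G{:}V|=2$ against $|G|$ odd. In the ``two classes'' case $\chi$ has no zero in $V$, so $[\chi_{V},\chi_{V}]=|G{:}V|$, and since $G{\setminus}V$ is two $G$-classes with $V$ normal, $G/V$ has at most three conjugacy classes and hence, being of odd order, is cyclic of order $3$. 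Then $\chi_{V}$ is a sum of three distinct $G$-conjugate irreducibles and $\chi=\theta^{G}$ for a non-$G$-invariant $\theta\in\Irr(V)$ whose induced character is zero-free on $V$; I would then analyse the action of $G/V$ on $V$ and on $\Irr(V)$ — the requirement that $\theta+\theta^{g}+\theta^{g^{2}}$ has no zero on $V$ is highly restrictive — to force $V$ to be a Frobenius kernel with $G/V$ acting fixed-point-freely, i.e.\ $G$ in family~(a). So it remains to treat $V=G$.

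The case $V=G$ is where I expect the real difficulty, and it should produce family~(b) with $p\geqslant5$. My plan is a minimal-counterexample argument: take $G$ of least order carrying an irreducible character $\chi$ with $G$ odd, $n\upsilon(\chi)=2$, $V(\chi)=G$, and $G$ not in family~(a) or~(b), and choose a minimal normal subgroup $N$ (abelian, as $G$ is solvable by Feit--Thompson). If $N\leqslant\ker\chi$, then the vanishing set of $\chi$ is both a union of $N$-cosets and a union of two $G$-classes, so $\chi$ deflates to a character of $G/N$ with one or two vanishing classes; using minimality — together with the fact that an odd-order group has no irreducible character vanishing on a single class (which itself comes out of the $V=G$ analysis, and is where oddness is used again) — I would conclude $G/N$ lies in family~(a) or~(b) and lift the structure to $G$. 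If $N\not\leqslant\ker\chi$, then $\chi$ restricts faithfully to $N$, with $\chi_{N}$ a $G$-orbit sum of linear characters of the elementary abelian $N$, and I would combine Clifford theory over $N$ with the decomposition $\chi\bar\chi=\rho+\Phi$ — $\rho$ the inflation of the regular character of $G/K$, where $K\normal G$ is the common kernel of the $\lambda\in\Irr(G/G')$ with $\lambda\chi=\chi$, and $\Phi$ a sum of nonlinear irreducibles — using Lemma~\ref{Lemma2(2)} and the positivity of $\Phi$ to pin down $G{\setminus}K$ and the degree $\chi(1)^{2}-|G{:}K|$ of $\Phi$, and iterate to extract the nested structure $N\normal M\normal G$ with $|M{:}N|=p$ and $\chi_{M}\in\Irr(M)$. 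The hard part will be exactly this bookkeeping: the lifting step in the first case, the Clifford analysis in the second, and checking throughout that precisely two vanishing classes survive.
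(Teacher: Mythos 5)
The paper does not prove this statement at all: it is quoted verbatim as \cite[Theorem 2.6]{Qia07} and used as a black box in the proof of Theorem~F, so the only ``proof'' to compare against is the citation. Your proposal, read as a proof, has a genuine gap: in both directions the decisive content is explicitly deferred. In the ``if'' direction you stop at ``bounded computations'' (and even there the assertion that $\lambda+\lambda^{c}+\lambda^{c^{2}}$ has no zero on $N$ needs the vanishing-sums-of-roots-of-unity argument together with $3\nmid|N|$; it is true, but it is not a triviality one can wave at). In the ``only if'' direction the case $V(\chi)=G$ --- which you yourself identify as ``where the real difficulty is'' and which is where family~(b) must emerge --- is left entirely as a plan (``the hard part will be exactly this bookkeeping''). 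That case is the substance of Qian's theorem; without it you have not proved the statement.

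Two further points on the executed portion. The opening moves are sound and in the right spirit: $\chi$ vanishes on $G\setminus V(\chi)$, Lemma~\ref{Lemma2(2)} rules out $|G{:}V|=2$ by oddness, and if $G\setminus V$ is two classes then $k(G/V)\leqslant 3$ forces $G/V\cong C_{3}$ and $\chi=\theta^{G}$ with $\theta$ non-invariant. But your implicit claim that the dichotomy $V(\chi)<G$ versus $V(\chi)=G$ aligns exactly with families (a) versus (b) is asserted, not checked; a priori an instance of (b) with $(p-1)/2=3$ could also have $\chi$ induced from an index-$3$ subgroup, so the case analysis may not be as clean as the sketch suggests. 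If you want a self-contained treatment you should either carry out the $V=G$ analysis in full or simply cite \cite{Qia07} as the paper does.
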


\begin{lemma}\label{numbertheoryresult}
Let $ \ell, m, n, b $ be positive integers with $ \ell,n \geqslant 2 $ and $ b \geqslant 2\ell + 1 $. Suppose that $ b= mn $. Then 
\begin{center}
$ b - m \geqslant \ell + 1 $.
\end{center}
\end{lemma}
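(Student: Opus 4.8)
The plan is to turn the divisibility hypothesis into an upper bound on $m$. Since $b = mn$ with $n \geq 2$, we have $m = b/n \leq b/2$, and therefore
\[
b - m \;\geq\; b - \frac{b}{2} \;=\; \frac{b}{2}.
\]
Now invoke the lower bound $b \geq 2\ell + 1$ to get $b - m \geq (2\ell+1)/2 = \ell + \tfrac12$. The point is that $b - m$ is an integer while $\ell + \tfrac12$ is not, so the inequality $b - m \geq \ell + \tfrac12$ forces $b - m \geq \lceil \ell + \tfrac12 \rceil = \ell + 1$, which is exactly the claim.

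There is essentially no obstacle here: the only step requiring a moment's care is the final integrality rounding, and that works precisely because $(2\ell+1)/2$ is a genuine half-integer (this is where the oddness of the bound $2\ell+1$, rather than $2\ell$, is used). I note in passing that the hypotheses $\ell \geq 2$ and the positivity of $m$ are not actually needed for the inequality itself; they merely record the range of parameters in which the lemma will be applied. One could also phrase the argument purely in integers, writing $2(b-m) = 2b - 2m \geq 2b - b = b \geq 2\ell + 1$, so $2(b-m) \geq 2\ell + 2$ (the left side being even and $\geq 2\ell+1$), hence $b - m \geq \ell + 1$; this avoids fractions altogether and may be the cleaner presentation.
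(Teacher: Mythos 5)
Your proof is correct, and it is cleaner than the one in the paper. The paper argues by cases: it splits according to whether $b$ is odd or even, and in the odd case further according to whether $b$ is prime (forcing $m=1$, so $b-m=b-1\geqslant 2\ell$) or composite (so that the smallest admissible $n$ is $3$ and $m\leqslant b/3$); in the even case it uses $b\geqslant 2\ell+2$ together with $m\leqslant b/2$. Your observation that the single uniform bound $m=b/n\leqslant b/2$ already gives $b-m\geqslant b/2\geqslant \ell+\tfrac12$, and that integrality then rounds this up to $\ell+1$, collapses all of these cases into one line; the paper is in effect reproving the same integrality step by hand via the parity split. Your argument also correctly notes that the hypothesis $\ell\geqslant 2$ is not needed for the inequality (it is only invoked in the paper's prime case, and even there $\ell\geqslant 1$ would do); it is carried in the statement because that is the range in which the lemma is applied. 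The all-integer rephrasing $2(b-m)\geqslant 2b - b = b\geqslant 2\ell+1$, hence $2(b-m)\geqslant 2\ell+2$ by evenness of the left-hand side, is a perfectly valid and arguably preferable way to package the same idea.
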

\begin{proof}
Suppose $ b $ is odd. If $ b $ is prime, then $ b - 1\geqslant 2\ell + 1 - 1 > \ell + 1 $. If $ b $ is not prime, then the greatest $ m $ is when $ n=3 $ and $ b - m \geqslant b - \frac{b}{3}\geqslant \frac{2}{3}b > \frac{b}{2}=\ell $. If $ b $ is even, then $ b \geqslant 2\ell + 2 $ and the greatest $ m $ is when $ n=2 $. Hence $ b - m\geqslant 2\ell + 2 -(\ell + 1)=\ell + 1 $ as required.
\end{proof}

\section{Non-solvable and solvable groups}\label{solvableandnonsolvable}

\begin{proof}[{\textbf{Proof of Theorem A}}]
We begin the proof by showing that we may assume that $ N $ is simple. For if $ N=T_{1}\times T_{2}\cdots \times T_{k} $, where $ T_{i}\cong T $, $ T $ is a non-abelian simple group, for $ i=1,2, \dots ,k $ and $ k\geqslant 2 $, then by Lemma \ref{extendible}, there exists $ \theta_{i} \in \Irr(T_{i}) $ such that $ \chi _{N}=\theta_{1}\times \theta _{2}\times \cdots \times \theta_{k} $, where $ \chi \in \Irr(G) $. Since $ \theta_{1}(1)> 1 $, there exists $ x\in T_{1} $ such that $ \theta_{1}(x)=0 $ by Burnside's Theorem. But $ \chi $ vanishes on $ (x,1,\dots , 1) $ and $ (x,x,1,\dots, 1) $. Since these two elements are not conjugate in $ G $, $ \chi $ vanishes on at least two conjugacy classes of $ G $. 

We may assume that $ N $ is simple. Let $ C=C_{G}(N) $. We claim that $ C=1 $. Otherwise, $ N\times C $ is a normal subgroup of $ G $. There exists a non-linear $ \theta \in \Irr(N) $ such that $ \theta _{N}=\chi $ by Lemma \ref{extendible}. There exists $ x\in N $ such that $ \theta (x)=0 $. Then $ \chi(xc)=0 $ for any $ c\in C $. Let $ c\in C{\setminus}\{1\} $. Since $ x $ and $ xc $ are not conjugate in $ G $, $ \chi $ vanishes on at least two conjugacy classes. Hence $ C=1 $ and we have that $ G $ is almost simple.
 
By Lemma \ref{extendible}, there exists $ \chi $ such that $ \chi_{N} $ is irreducible and $ \chi(1)\geqslant 5 $. If $ \chi $ vanishes on two conjugacy classes, then the result follows. We may assume that $ \chi $ vanishes on one conjugacy class. By Proposition \ref{classificationoneclass} and considering the choice of $ \chi $  from Lemma \ref{extendible}, we are left with the following cases: $ G\in \{ \PGL_{2}(q), \AAA_{6}{:}2_{2}, \AAA_{6}{:}2_{3}\} $(note that $ \PSL_{2}(9)\cong \AAA_{6} $). Note that $ N=\PSL_{2}(q) $. We will choose another irreducible character of $ N $ that extends to $ G $. Obviously that alternative character vanishes on at least two conjugacy classes. For $ G=\SSS_{5} $, let $ \theta\in \Irr(\AAA_{5}) $ be such that $ \theta(1)=4 $. Then $ \theta $ is extendible to $ \Aut(\AAA_{5})=\SSS_{5} $. Using the \Atlas {} \cite{CCNPW85}, we have that $ \chi_{G} $ vanishes on two elements of distinct orders. Hence the result follows. 

For $ G\in \{ \AAA_{6}{:}2_{2}, \AAA_{6}{:}2_{3} \}$, using the \Atlas{} \cite{CCNPW85}, we can choose an irreducible character $ \theta $ of $ N $ of degree $ 10 $ that extends to $ \Aut(\AAA_{6}) $. Lastly, $ \chi $ vanishes on more than two conjugacy classes of $ G $. Hence the result follows.

Suppose $ G = \PGL_{2}(q) $, where $ q\geqslant 7 $. It is well known that $ \PSL_{2}(q) $ has irreducible characters of degree $ q-1 $ and $ q + 1 $. By \cite[Theorem A]{Whi13}, an irreducible character $ \theta $ of degree $ q + 1 $ is extendible to $ \Aut(\PSL_{2}(q)) $ except when $ N=\PSL_{2}(3^{f}) $ and $ G=\PGL_{2}(3^{f}) $, with $ f $ an odd positive integer (case (iii) of \cite[Theorem A]{Whi13}). If $ N=\PSL_{2}(3^{f}) $ and $ G=\PGL_{2}(3^{f}) $ with $ f $ an odd positive integer, then we choose an irreducible character $ \theta $ of $ \PSL_{2}(3^{f}) $ of degree $ q - 1 $ which is extendible to $ \Aut(\PSL_{2}(3^{f})) $. Thus the result follows.
\end{proof}

\begin{proof}[{\textbf{Proof of Theorem B}}] We shall prove our result by induction on $ |G| $. We may assume that $ G $ is non-solvable. If $ G=G' $, then the result follows by Lemma \ref{G=G'holds}, so $ G\neq G' $. 

Let $ G^{\infty} $ be the solvable residual of $ G $. Note that $ G^{\infty} $ is perfect. If $ N < G^{\infty} \leqslant G' $ is a minimal normal subgroup of $ G $, then $ G^{\infty}/N $ is perfect and so $ G/N $ is non-solvable. But $ \anz(G/N) < 1 $ by Lemma \ref{noofzerosfactorgroup} and hence $ G/N $ is solvable by induction, a contradiction. 

We may assume that $ N=G^{\infty} $ is a non-abelian minimal normal subgroup of $ G $. By Theorem A, there exists $ \chi \in \Irr(G) $ such that $ \chi_{N} $ is irreducible and $ \chi $ vanishes on two conjugacy classes. Suppose the two conjugacy classes are represented by elements $ g_{1} $ and $ g_{2} $ of $ G $. Since $ \chi_{G'} $ is irreducible, we have that the character $ \beta\chi  $ is irreducible for every linear $ \beta $ of $ G $ by Lemma \ref{injectivemap}. The $ \beta\chi $'s are distinct for distinct characters $ \beta $. We show that every character of the form $ \beta\chi  $ also vanishes on $ g_{1} $ and $ g_{2} $. Then $ \beta\chi(g_{i})=\beta(g_{i})\chi({g_{i}})=\beta(g_{i})\cdot 0=0 $, where $ i\in \{ 1, 2\} $.

Hence for every linear character $ \beta $ of $ G $, there is a corresponding non-linear irreducible character of  $ G $ of the form $ \beta\chi  $ that vanishes on two conjugacy classes of $ G $. Let $ a $ be the number of linear characters of $ G $ and $ b $ be the number of non-linear irreducible characters not of the form  $ \beta\chi  $ (note that these irreducible characters may vanish on more than one conjugacy class). Then $ |\Irr(G)|=2a + b $ and $ \nz(G)=2a + b + c $, where $ c $ is a non-negative integer. Therefore $ \anz(G)=\frac{2a + b + c}{2a + b}\geqslant \frac{2a + b}{2a + b}\geqslant \frac{2a}{2a}= 1 $, concluding our argument.
\end{proof}

\section{Supersolvable, nilpotent and abelian groups}\label{supersolvableuptoabelian}

\begin{proof}[{\textbf{Proof of Theorem C}}]
Suppose $ G $ is non-abelian, that is, $ G' > 1 $. We shall use induction on $ |G| $ to show that $ G $ is supersolvable. By Theorem B, $ G $ is solvable since $ \anz(G) < \frac{1}{2} < 1 $. Let $ N\leqslant G' $ be a minimal normal subgroup of $ G $. By Lemma \ref{noofzerosfactorgroup}, $ \anz(G/N)\leqslant \anz(G) < \frac{1}{2} $. Using the inductive hypothesis, $ G/N $ is supersolvable. If $ N $ is cyclic or $ N\leqslant \Phi(G) $, the Frattini subgroup, then $ G $ is supersolvable and the result follows. 

Suppose that $ \chi \in \Irr(G) $ be non-linear such that $ \chi_{G'} $ is irreducible. By Lemma \ref{injectivemap}, for every linear character $ \beta_{i} \in \Irr(G/G') $, there exists a non-linear $ \beta_{i}\chi  \in \Irr(G) $, that is, the number of linear characters of $ G $ is less than or equal to the number of non-linear irreducible characters of $ G $. Every non-linear irreducible character of $ G $ vanishes on at least one conjugacy class by \cite[Theorem 3.15]{Isa06}. Let $ |\Irr(G)|=a + b $, where $ a $ is the number of the non-linear characters and $ b $ is the number of linear characters and $ \nz(G)=a + c $, where $ c $ is non-negative integer. Then $ \anz(G)=\frac{a + c}{a + b}\geqslant \frac{a + c}{2a} \geqslant\frac{1}{2} $, since $ a\geqslant b $. This contradicts our hypothesis.

Hence every non-linear irreducible character $ \chi $ of $ G $ is such that $ \chi_{G'} $ is not irreducible. Note that $ G $ is an $ M $-group by \cite[Theorems 6.22 and 6.23 ]{Isa06} since $ G/N $ is supersolvable and $ N $ is abelian. By Lemma \ref{ZSS10Lemma2.1}, there exists a normal subgroup $ K $ of $ G $ such that $ G' \leqslant K < G $ and $ G{\setminus} K \subseteq \upsilon(\chi) $. If $ \upsilon(\chi) $ is a conjugacy class, then $ G $ is a Frobenius group with an abelian kernel and a complement of order two by Lemma \ref{Lemma2(2)}. Thus by \cite[Proposition 2.7]{Chi99}, every irreducible character of $ G $ vanishes on at most one conjugacy class. Note that $ G $ has two linear characters. Since $ \anz(G) < \frac{1}{2} $, $ G $ can only have one non-linear character. It follows that $ G $ has three conjugacy classes. 
This implies that $ G\cong \SSS_{3} $. Now $ \SSS_{3} $ is supersolvable group and we are done.

Therefore every non-linear irreducible character of $ G $ vanishes on at least $ \ell $ conjugacy classes, where $ \ell \geqslant 2 $. Suppose $ G $ has an irreducible character $ \chi $ that vanishes on exactly $ \ell $ conjugacy classes. Let $ b $ be the number of linear characters of $ G $ and $ m $ be the number of non-linear irreducible characters of $ G $. If $ b \leqslant 2\ell -1 $, then $ \anz(G)\geqslant \frac{m\ell}{b + m}\geqslant \frac{1}{2} $, contradicting our hypothesis. We may assume that $ b = 2\ell $. Then $ G $ has only one non-linear irreducible character $ \chi $. By Lemma \ref{Sei68}, $ G $ is a Frobenius group with an elementary abelian kernel of order $ p^{n} $ and cyclic complement of order $ p^{n}-1 $ for some prime $ p $. Also note that $ |\Irr(G)|=2\ell + 1 $. By Lemma \ref{Sei68}, $ \anz(G)=\frac{2\ell - 1}{2\ell + 1} \geqslant \frac{1}{2} $ since $ \ell \geqslant 2 $, a contradiction. 

Suppose that $ b \geqslant 2\ell + 1 $. Then by Lemma \ref{ZSS10Lemma2.1}, there exists a normal subgroup $ K $ of $ G $ such that $ G' \leqslant K < G $ and $ G{\setminus} K \subseteq \upsilon(\varphi) $ for every non-linear irreducible character $ \varphi $ of $ G $. We also have that $ |G{:}G'| - |K{:}G'| = b - | K{:}G'| \geqslant \ell + 1 $ by Lemma \ref{numbertheoryresult}, again contradicting our hypothesis that $ G $ has an irreducible character that vanishes on exactly $ \ell $ conjugacy classes. This concludes our proof.
\end{proof}

\begin{proof}[{\textbf{Proof of Theorem D}}]
Suppose $ \anz(G)<\frac{1}{3} $. By Theorem C, $ G $ is supersolvable group. In particular, $ G $ is an $ M $-group. Suppose that $ \chi \in \Irr(G) $ be non-linear such that $ \chi_{G'} $ is irreducible. Then by the argument in the second paragraph of the proof of Theorem C, $ \anz(G)\geqslant\frac{1}{2} > \frac{1}{3} $, a contradiction. Hence every non-linear irreducible character $ \chi $ of $ G $ is such that $ \chi_{G'} $ is not irreducible. There exists a normal subgroup $ K $ of $ G $ such that $ G' \leqslant K < G $ and $ G{\setminus} K \subseteq \upsilon(\chi) $ by Lemma \ref{ZSS10Lemma2.1}. If $ \upsilon(\chi) $ is a conjugacy class, then $ G $ is a Frobenius group with an abelian kernel and a complement of order two using Lemma \ref{Lemma2(2)}. By \cite[Proposition 2.7]{Chi99}, we have that every irreducible character of $ G $ vanishes on at most one conjugacy class. Since $ G $ has two linear characters, $ \anz(G) \geqslant \frac{1}{3} $, contradicting our hypothesis. 

Hence every non-linear irreducible character of $ G $ vanishes on at least $ \ell $ conjugacy classes, where $ \ell \geqslant 2 $. Suppose $ G $ has an irreducible character $ \chi $ that vanishes on exactly $ \ell $ conjugacy classes. Let $ b $ be the number of linear characters of $ G $. If $ b \leqslant 3\ell -1 $, then $ \anz(G)\geqslant \frac{1}{3} $, a contradiction. We may assume that $ b = 3\ell $. Then $ G $ has only one non-linear irreducible character $ \chi $. By Lemma \ref{Sei68}, $ G $ is a Frobenius group with an elementary abelian kernel of order $ p^{n} $ and a cyclic complement of order $ p^{n}-1 $ for some prime $ p $. By Lemma \ref{Sei68}, $ \anz(G)=\frac{2\ell - 1}{2\ell + 1} > \frac{1}{3} $ since $ \ell \geqslant 2 $, a contradiction. 

Suppose that $ b \geqslant 3\ell + 1 $. Then by Lemma \ref{ZSS10Lemma2.1}, there exists a normal subgroup $ K $ of $ G $ such that $ G' \leqslant K < G $ and $ G{\setminus} K \subseteq \upsilon(\varphi) $ for every non-linear irreducible character $ \varphi $ of $ G $. We also have that $ |G{:}G'| - |K{:}G'| = b - | K{:}G'| \geqslant \ell + 1 $ by Lemma \ref{numbertheoryresult}, contradicting our hypothesis that $ G $ has an irreducible character that vanishes on exactly $ \ell $ conjugacy classes. This concludes our proof. 
\end{proof}

\begin{rem}\label{Abeliangroups}
Let $ \mathcal{L} =\{ G \mid G $ is an extra-special $ 2 $-group of order $ 2^{2k + 1} $ for some positive integer  $ k \} $ and $ G\in \mathcal{L} $. Since $ |G/G'|=2^{2k} $ and $ G $ has only one non-linear irreducible character, we have $ \acd(G)=\frac{2^{2k} + 2^{k}}{2^{2k} + 1} $. Note that $ \mathcal{L} $ is an infinite family of nilpotent groups $ G $ such that $ \acd(G) < \frac{4}{3} $. Hence $ \acd(G) \rightarrow 1  $ and $ G\rightarrow \infty $. In other words there does not exist $ c > 1 $ such that $ \acd(G) < c $ implies that $ G $ is abelian.
\end{rem}

Our last result shows that there does not exist a non-abelian nilpotent group $ G $ such that $ \anz(G) < \frac{1}{3} $. We shall restate Theorem E below.

\begin{theorem}
Let $ G $ be a finite group. Then $ G $ is abelian if and only if $ \anz(G) < \frac{1}{3} $.
\end{theorem}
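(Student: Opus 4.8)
The plan is to prove the non-trivial implication in contrapositive form. The direction ``$G$ abelian $\Rightarrow\anz(G)<\tfrac13$'' is immediate: the irreducible characters of an abelian group are linear and hence vanish nowhere, so $\anz(G)=0$. For the converse I would show that a non-abelian group $G$ satisfies $\anz(G)\geqslant\tfrac13$, splitting into two cases. If $G$ is not nilpotent, then Theorem~D (contrapositive) already gives $\anz(G)\geqslant\tfrac13$, so nothing remains there. Hence the real content is to establish $\anz(G)\geqslant\tfrac13$ for every \emph{non-abelian nilpotent} group $G$, which is exactly the statement announced just before the theorem.

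So suppose $G$ is nilpotent and non-abelian, and write $G=P_{1}\times\cdots\times P_{r}$ for the direct product of its Sylow subgroups; at least one factor $P:=P_{i}$ is non-abelian. My first move is to reduce to $p$-groups, using that the character table of a direct product is the tensor product of the factor tables. Since $\Irr(A\times B)=\{\alpha\times\beta:\alpha\in\Irr(A),\ \beta\in\Irr(B)\}$ and $\alpha\times\beta$ vanishes precisely on those pairs of classes $(C_{A},C_{B})$ with $\alpha(C_{A})=0$ or $\beta(C_{B})=0$, a short count gives $\nz(A\times B)=k(A)^{2}\nz(B)+\nz(A)k(B)^{2}-\nz(A)\nz(B)$, hence
\[
\anz(A\times B)=k(A)\,\anz(B)+k(B)\,\anz(A)-\anz(A)\,\anz(B)=k(B)\,\anz(A)+\anz(B)\bigl(k(A)-\anz(A)\bigr).
\]
Since every irreducible character is non-vanishing on the identity class, $\anz(A)\leqslant k(A)-1$, so the second summand is non-negative and $\anz(A\times B)\geqslant\anz(A)$. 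Iterating over the Sylow factors gives $\anz(G)\geqslant\anz(P)$, so it suffices to prove $\anz(P)\geqslant\tfrac13$ for an arbitrary non-abelian $p$-group $P$.

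For the $p$-group step, let $L=|P{:}P'|$ be the number of linear characters of $P$ and $N\geqslant1$ its number of non-linear irreducible characters, so $|\Irr(P)|=L+N$. If some non-linear $\chi\in\Irr(P)$ has $\chi_{P'}$ irreducible, then by Lemma~\ref{injectivemap} the characters $\beta\chi$ with $\beta\in\Irr(P/P')$ are pairwise distinct and non-linear, so $N\geqslant L$, and arguing exactly as in the second paragraph of the proof of Theorem~C one obtains $\anz(P)\geqslant\tfrac12$. Otherwise $\chi_{P'}$ is reducible for every non-linear $\chi$, and Lemma~\ref{ZSS10Lemma2.1} provides, for each such $\chi$, a normal subgroup $K=K_{\chi}$ with $P'\leqslant K<P$ and $|P{:}P'|-|K{:}P'|\leqslant n\upsilon(\chi)$; as $K$ is a proper subgroup of a $p$-group, $|K{:}P'|\leqslant L/p$, whence $\chi$ vanishes on at least $L-L/p=L(p-1)/p$ classes. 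Consequently $\nz(P)\geqslant NL(p-1)/p$, and since $N\mapsto NL(p-1)/\bigl(p(L+N)\bigr)$ is increasing,
\[
\anz(P)=\frac{\nz(P)}{L+N}\geqslant\frac{NL(p-1)/p}{L+N}\geqslant\frac{L(p-1)/p}{L+1}.
\]
Finally $\dfrac{L(p-1)/p}{L+1}\geqslant\dfrac13$ is equivalent to $L(2p-3)\geqslant p$, which holds since $L=|P{:}P'|\geqslant p\geqslant2$ (for $p=2$ this reads $L\geqslant2$; for $p\geqslant3$ it follows from $2p-3\geqslant p$). So $\anz(P)\geqslant\tfrac13$ in both cases, which completes the argument.

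The hard part, such as it is, will be the two bookkeeping computations --- the direct-product formula for $\anz$, and the lower bound $n\upsilon(\chi)\geqslant L(p-1)/p$ for a non-linear character of a $p$-group --- while the closing arithmetic is elementary. The point to watch is that Lemma~\ref{ZSS10Lemma2.1} is applied with the normal subgroup taken to be $P'$ itself: this is legitimate because $P'<P$ for a non-abelian $p$-group and because, in the case being treated, $\chi_{P'}$ is assumed reducible. One should also note that the inequality $L\geqslant p$ uses only that $P/P'$ is a non-trivial abelian $p$-group, and that $N\geqslant1$ because $P$ is non-abelian.
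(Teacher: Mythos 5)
Your argument is correct, and it takes a genuinely different route from the paper's. The paper proves the non-trivial direction by induction on $|G|$: after invoking Theorem~D it reduces to the case where $G'$ is a minimal normal subgroup, shows every non-linear character vanishes on at least $\ell\geqslant 2$ classes, and then runs the trichotomy $b\leqslant 3\ell-1$, $b=3\ell$, $b\geqslant 3\ell+1$ on the number $b$ of linear characters, using Seitz's classification of groups with a unique non-linear irreducible character (Lemma~\ref{Sei68}) in the middle case and Lemma~\ref{numbertheoryresult} in the last. You instead reduce to Sylow subgroups via the tensor-product formula $\anz(A\times B)=k(B)\anz(A)+\anz(B)\bigl(k(A)-\anz(A)\bigr)\geqslant\anz(A)$ (which is correct, using $\anz(A)\leqslant k(A)-1$), and then for a non-abelian $p$-group $P$ either fall into the Gallagher case of Theorem~C's proof or extract from Lemma~\ref{ZSS10Lemma2.1}(b) the quantitative bound $n\upsilon(\chi)\geqslant L(p-1)/p$ with $L=|P{:}P'|$, since $|K{:}P'|$ is a proper divisor of the $p$-power $L$; the closing inequality $L(2p-3)\geqslant p$ checks out (indeed $L\geqslant p^{2}$ for a non-abelian $p$-group, more than you need). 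Your route avoids induction, Seitz's theorem and the number-theoretic lemma entirely, and the direct-product identity is a reusable observation the paper does not record; the paper's route, on the other hand, is uniform with its proofs of Theorems~C and~D and does not require the reduction to $p$-groups. Both are complete proofs.
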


\begin{proof} If $ G $ is abelian, then $ \anz(G)=0 < \frac{1}{3} $. Suppose that $ \anz(G) < \frac{1}{3} $. Then $ G $ is nilpotent group by Theorem D. Using induction on $ |G| $, we have that $ G/N $ is abelian for some minimal normal subgroup $ N $ of $ G $, that is, $ N=G' $. If $ \chi_{G'} $ is irreducible, then $ \chi $ is linear. Hence every non-linear irreducible character $ \chi $ is such that $ \chi _{G'} $ is not irreducible. 

By Lemma \ref{ZSS10Lemma2.1}, there exists a normal subgroup $ K $ of $ G $ such that $ G' \leqslant K < G $ and $ G{\setminus} K \subseteq \upsilon(\chi) $. If $ \upsilon(\chi) $ is a conjugacy class, then $ G $ is a Frobenius group with an abelian kernel and a complement of order two by Lemma \ref{Lemma2(2)}, a contradiction since $ G $ is nilpotent.

Hence every non-linear irreducible character of $ G $ vanishes on at least $ \ell $ conjugacy classes, where $ \ell \geqslant 2 $. Suppose $ G $ has an irreducible character $ \chi $ that vanishes on $ \ell $ conjugacy classes. Let $ b $ be the number of linear characters of $ G $. If $ b \leqslant 3\ell -1 $, then $ \anz(G)\geqslant \frac{1}{3} $, a contradiction. We may assume that $ b = 3\ell $. Then $ G $ has only one non-linear irreducible character $ \chi $. By Lemma \ref{Sei68}, $ G $ is an extra-special group $ 2 $-group. Using the second part of Lemma \ref{Sei68}, we have that $ \anz(G)=\frac{2\ell - 1}{2\ell + 1} > \frac{1}{3} $ since $ \ell \geqslant 2 $, a contradiction. 

Suppose that $ b \geqslant 3\ell + 1 $. Then by Lemma \ref{ZSS10Lemma2.1}, there exists a normal subgroup $ K $ of $ G $ such that $ G' \leqslant K < G $ and $ G\setminus K \subseteq \upsilon(\varphi) $ for every non-linear irreducible character $ \varphi $ of $ G $. We have that $ |G{:}G'| - |K{:}G'| = b - | K{:}G'| \geqslant \ell + 1 $ by Lemma \ref{numbertheoryresult}, contradicting our hypothesis that $ G $ has an irreducible character that vanishes on exactly $ \ell $ conjugacy classes. This concludes our proof. 
\end{proof}

\begin{proof}[\textbf{Proof of Theorem F}] Suppose that $ G $ is non-abelian. Note that $ G $ is solvable. Also note that every non-linear irreducible character of $ G $ vanishes on at least $ \ell $ conjugacy classes, $ \ell \geqslant 2 $. Suppose that $ G $ has an irreducible character $ \chi $ that vanishes on two conjugacy classes. By Lemma \ref{Qia07Theorem2.6}, either $ G $ is a Frobenius group with a complement of order $ 3 $ or there are normal subgroups $ M $ and $ N $ of $ G $ such that: $ M $ is a Frobenius group with the kernel $ N $; $ G/N $ is a Frobenius group of order $ p(p - 1)/2 $ with the kernel $ M/N $ and a cyclic complement of order $ (p - 1)/2 $ for some odd prime $ p $. If $ G $ is a Frobenius group with a complement of order $ 3 $, then $ G $ can only have at most two non-linear irreducible characters. Otherwise $ \anz(G)\geqslant \frac{8}{7} $, a contradiction (note that a group of odd order has an even number of non-linear irreducible characters). By Lemma \ref{Pal81}, $ G $ is the group in the case (d) of Lemma \ref{Pal81} with $ (p^{k} - 1)/2=3 $, that is, $ p^{k} = 7 $ and $ |G|=21 $. Hence $ G $ is supersolvable. We may assume that $ G $ is the group in the second case above. Then $ M=G' $, $ \chi_{M} $ is irreducible and by Lemma \ref{injectivemap}, for every linear character $ \alpha $ of $ G $, there exists a corresponding $ \alpha \chi \in \Irr(G) $ and hence $ \anz(G) \geqslant 1 $, contradicting our hypothesis. 

Hence every non-linear irreducible character of $ G $ vanishes on at least $ \ell $ conjugacy classes, $ \ell \geqslant 3 $. Suppose that $ G $ has irreducible character vanishing on exactly $ \ell $ conjugacy classes. If there exists $ \chi $ such that $ \chi_{G'} $ is irreducible, then for every linear character $ \alpha $ of $ G $, there exists a corresponding $ \alpha \chi \in \Irr(G) $ and hence $ \anz(G) \geqslant 1 $, contradicting our hypothesis. We may assume that for every irreducible character $ \chi $ of $ G $, $ \chi_{G'} $ is reducible. By Lemma \ref{ZSS10Lemma2.1}, there exists a normal subgroup $ K $ of $ G $ such that $ G' \leqslant K < G $ and $ G{\setminus} K \subseteq \upsilon(\varphi) $ for every non-linear irreducible character $ \varphi $ of $ G $. If $ b \leqslant 2\ell - 1 $, then $ G $ has at most two non-linear irreducible characters. Otherwise, $ \anz(G)\geqslant \frac{3\ell}{2\ell + 2} > 1 $, a contradiction. Hence $ G $ has exactly two non-linear irreducible characters. By Lemma \ref{Pal81}, $ G $ is a Frobenius group with an elementary abelian kernel $ G' $ of order $ p^{k} $ and a cyclic Frobenius complement of order $ (p^{k} - 1)/2 $, where $ p $ is odd prime. Note that every non-linear irreducible character of $ G $ vanishes on $ G{\setminus} G' $. Thus $ b=(p^{k} - 1)/2 $ and since $ |G{:}G'|=b $, we have that $ b - 1 \leqslant \ell $ using Lemma \ref{ZSS10Lemma2.1}. Hence $ \anz(G)\geqslant \frac{2(b-1)}{b + 2} > 1 $, a contradiction to our hypothesis.

If $ b \geqslant 2\ell + 1 $, we have that $ |G{:}G'| - |K{:}G'| = b - | K{:}G'| \geqslant \ell + 1 $ by Lemma \ref{numbertheoryresult}, a contradiction to the hypothesis that $ G $ has an irreducible character that vanishes on exactly $ \ell $ conjugacy classes.
\end{proof}
\section*{Acknowledgements}
The author would like to thank the reviewer for providing simpler arguments in the proofs of several lemmas and Theorem A.

\end{document}